\newcommand {\scs}[1]{\scriptscriptstyle\scriptscriptstyle{#1}}
\newtheorem{thm}{Theorem}[section]
\newtheorem{lem}[thm]{Lemma}
\theoremstyle{definition}
\theoremstyle{remark}
\numberwithin{equation}{section}
\begin{document}

\title[On mixed steps-collocation schemes for nonlinear...]{On mixed steps-collocation schemes for nonlinear fractional delay differential equations}%
\author[Mousa-Abadian]{M.~Mousa-Abadian}\email{m.abadian@shahed.ac.ir}\email{momeni@shahed.ac.ir}
\author[Momeni-Masuleh]{S.~H.~Momeni-Masuleh$^*$}
\address{Department of Mathematics, Shahed University, P.O. Box 18151-159, Tehran, Iran}%
%\email[1,2]{momeni@shahed.ac.ir, m.abadian@shahed.ac.ir}%
\thanks{$^*$Corresponding author}
%\corres{*}
%\thanks{}%
\subjclass[2010]{34K37, 34A34, 26A33, 65M70, 33C45}%
\keywords{Nonlinear fractional differential equations, delay differential equations, method of steps, shifted Legendre (Chebyshev) basis}%

%\date{}%
%\dedicatory{}%
%\commby{}%
% ----------------------------------------------------------------
\begin{abstract}
  This research deals with the numerical solution of non-linear fractional differential equations with delay using the method of steps and shifted Legendre (Chebyshev) collocation method. This article aims to present a new formula for the fractional derivatives (in the Caputo sense) of shifted Legendre polynomials. With the help of this tool and previous work of the authors, efficient numerical schemes for solving non-linear continuous fractional delay differential equations are proposed. The proposed schemes transform the nonlinear fractional delay differential equations to a non-delay one by employing the method of steps. Then, the approximate solution is expanded in terms of Legendre (Chebyshev) basis. Furthermore, the convergence analysis of the proposed schemes is provided. Several practical model examples are considered to illustrate the efficiency and accuracy of the proposed schemes.
\end{abstract}
\maketitle
% ----------------------------------------------------------------
\section{Introduction}
Delay differential equations (DDEs) belong to a broader class of functional differential equations in which the rate of change of the unknown function at a specific time is represented due to the values of the function at previous times. DDEs are also known as differential-difference equations. Laplace and Condorcet~\cite{chen2002} first studied these equations, and naturally, appear in various fields of science and engineering~\cite{2}. Fractional delay differential equations (FDDEs) are considered as a generalization of DDEs which contain derivatives of arbitrary fractional order. The integer order differential operator is a local operator, while the fractional order differential operator is a non-local operator. More precisely, the next state of a system which is modeled by fractional differential equations (FDEs), depends not only upon its present situation but also upon all of its past situations. The fractional order differential operator enables us to describe a real event more accurately than the classical integer order differential operator. In recent years, FDEs and FDDEs are frequently used to model many real phenomena in the fields of control theory~\cite{3}, biology~\cite{4,N4}, economy~\cite{5}, and so on.

Because of the computational complexities of fractional delay derivatives, for most of the FDEs and FDDEs, the exact solution is available and therefore, it is necessary to employ numerical methods for solving such equations. Shakeri and Dehghan~\cite{N1} employ the homotopy perturbation method to solve delay differential equations with integer order derivatives. Variational iteration method is considered by Saadatmandi and Dehghan~\cite{N2} to obtain the numerical solution of the generalized pantograph equation.  Moghaddam and Mostaghim~\cite{6,180} introduced  numerical methods in the frame work of the finite difference method for solving FDDEs. They also presented a matrix approach using fractional finite difference method to solve nonlinear FDDEs~\cite{7}. Prakash~et~al.~\cite{8} proposed a numerical algorithm based on a modified He-Laplace method to solve nonlinear FDDEs. Wang~\cite{9} combined the Adams-Bashforth-Moulton method with the linear interpolation method to find an approximate solution for FDDEs. Mohammed and Khlaif~\cite{a} applied the Adomian decomposition method to get the numerical solution of FDDEs. Mousa-Abadian and Momeni-Masuleh proposed a numerical scheme to solve linear fractional delay differential systems~\cite{Mousa}. Their scheme employs the method of steps to handle the delay term and Chebyshev-Tau method to construct the approximate solution. Sedaghat~et~al.~\cite{11} introduced a numerical scheme using Chebyshev polynomials for solving FDDEs of pantograph type. Saeed~et~al.~\cite{12} developed Chebyshev wavelet methods for solving FDDEs. Khader~\cite{13} derived an approximate formula of the Laguerre polynomials for the numerical treatment of FDDEs. Daftardar-Gejji~et~al.~\cite{16} extended a new predictor-corrector method to solve FDDEs. Moghaddam~et~al.~\cite{181} developed a numerical method based on the Adams-Bashforth-Moulton method for solving variable-order FDDEs. Yaghoobi~et~al.~\cite{182} devised a numerical scheme based on a cubic spline interpolation for solving variable-order FDDEs. Khosravian-Arab~et~al.~\cite{N5} devised new Lagrange basis functions to approximate fractional derivatives in unbounded domains. Their approach is based on the pseudo-spectral, Galerkin, and Petrov-Galerkin methods.

A numerical approach to solve nonlinear FDDEs can be a generalization of the method which introduced in Ref.~\cite{Mousa}. The Chebyshev collocation method can be considered solving nonlinear FDDEs. Of course, employing collocation methods are not restricted to use Chebyshev basis, but also Legendre basis can be applied. Therefore, a significant part of this article deals with solving nonlinear FDDEs by using Legendre basis. In this paper, we derive a new formula for the fractional derivatives of shifted Legendre polynomials and then present the efficient numerical schemes for solving nonlinear FDDEs.

The remainder of this article proceeds as follows. In the next section, the properties of shifted collocation-type bases are discussed. In Section~3, a formula for the fractional derivatives of shifted Legendre basis is derived. Section~4 describes mixed steps-collocation schemes for solving nonlinear FDDEs. The convergence analysis of the proposed schemes has been done in Section~5. Section~6 concerns with applying the proposed schemes to several nonlinear FDDEs. The conclusion is given in Section~7.

\section{Shifted collocation-type bases}
The most common collocation methods are those based on Chebyshev and Legendre basis.
Properties of shifted Chebyshev basis have been investigated in Ref.~\cite{Mousa}. Here, we deduce the properties of shifted Legendre basis. The Legendre basis $L_{\scs k}(t)$ for $k=0, 1, \dots,$  and $-1\leq t\leq 1$, can be defined as the solution of following ordinary differential equation~\cite{9a}
  \begin{equation*}
  \frac{d}{dt}\Big((1-t^2)\frac{dL_{\scs k}}{dt}(t)\Big)+k(k+1)L_{\scs k}(t)=0,
  \end{equation*}
  which satisfy $L_{\scs k}(\pm1)=(\pm1)^k$. For $k\geq 1$, we have the following recurrence formula
  \begin{equation}\label{rec}
  L_{\scs {k+1}}(t)=\frac{2k+1}{k+1}tL_{\scs k}(t)-\frac{k}{k+1}L_{\scs {k-1}}(t),
  \end{equation}
  where $L_{\scs 0}(t)=1$ and $L_{\scs 1}(t)=t$. The shifted Legendre basis are defined on the interval $[\alpha,\beta]$ using the change of variable
%\begin{equation*}
$t=\frac{2}{\beta-\alpha}(x-\beta)+1$.
%\end{equation*}
For the simplicity, let us to denote $L_{\scs {k}}(\frac{2}{\beta-\alpha}(x-\beta)+1)$ by $L_{\scs{\alpha,\beta,k}}(x)$. Therefore, similar to \eqref{rec}, the following recurrence relation can be obtained
  \begin{equation*}
  L_{\scs{\alpha,\beta,k+1}}(x)=\frac{2k+1}{k+1}\big(\frac{2}{\beta-\alpha}(x-\beta)+1 \big)L_{\scs{\alpha,\beta,k}}(x)-\frac{k}{k+1}L_{\scs{\alpha,\beta,k-1}}(x).
  \end{equation*}
The shifted Legendre basis $L_{\scs{\alpha,\beta,k}}(x)$ can be presented in the following form
\begin{equation}\label{01}
L_{\scs{\alpha,\beta,n}}(x)=\frac{1}{2^n}\sum_{k=0}^{[\frac{n}{2}]}(-1)^k\binom{n}{k}\binom{2n-2k}{n}\big(\frac{2}{\beta-\alpha}(x-\beta)+1 \big)^{n-2k}.
\end{equation}
By using the identity
\begin{equation}\label{001}
\big(\frac{2}{\beta-\alpha}(x-\beta)+1 \big)^{n-2k}=\sum_{l=0}^{n-2k}\sum_{j=0}^{l}\frac{(-1)^{l-j}\binom{l}{j}\binom{n-2k}{l}2^l\beta^{l-j}}{(\beta-\alpha)^l}x^j,
\end{equation}
the shifted Legendre basis $L_{\scs{\alpha,\beta,k}}(x)$ can be written in terms of a power series in $x$ as
\begin{equation}\label{1}
L_{\scs{\alpha,\beta,n}}(x)=\sum_{k=0}^{[\frac{n}{2}]}\sum_{l=0}^{n-2k}\sum_{j=0}^{l}\frac{(-1)^{k-j+l}2^{l-n}\beta^{l-j}(2n-2k)!}{(\beta-\alpha)^l(n-k)!k!(n-2k-l)!j!(l-j)!}x^j,
\end{equation}
which satisfy $L_{\scs{\alpha,\beta,k}}(\beta)=1$ and $L_{\scs{\alpha,\beta,k}}(\alpha)=(-1)^k$.

The next lemma describes integer order derivatives of the shifted Legendre basis.
\begin{lem}\label{marz}
For $m\leq j\leq n$, we have
\begin{equation}
D^m L_{\scs{\alpha,\beta,n}}(x)=
\sum_{k=0}^{[\frac{n}{2}]}\sum_{l=m}^{n-2k}\sum_{j=m}^{l}\frac{(-1)^{k-j+l}2^{l-n}(\beta-\alpha)^{-l}\beta^{l-j}(2n-2k)!}{(n-k)!k!(n-2k-l)!(l-j)!(j-m)!}x^{j-m}.
\end{equation}
\end{lem}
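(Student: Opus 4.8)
The plan is to obtain the formula by differentiating the explicit power-series representation \eqref{1} term by term. Since $D^m$ is a linear operator, I would apply it directly to the triple sum in \eqref{1}, noting that within each summand the only factor depending on $x$ is the monomial $x^j$; all the remaining factors are constants that pass through the differentiation unchanged.

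First I would recall the elementary rule $D^m x^j = \frac{j!}{(j-m)!}\,x^{j-m}$ when $j \geq m$, together with $D^m x^j = 0$ when $j < m$. Applying this to each term of \eqref{1}, every summand with $j < m$ is annihilated, so the inner summation over $j$ now begins at $j=m$ rather than $j=0$. Consequently, because $j$ ranges from its new lower limit up to $l$, any index $l < m$ produces an empty inner sum; this permits raising the lower limit of the $l$-summation from $l=0$ to $l=m$ as well. At this stage I would then simplify the surviving coefficient: the factor $j!$ in the denominator of \eqref{1} cancels exactly against the factor $j!$ produced by $D^m x^j$, leaving $(j-m)!$ in the denominator alongside the monomial $x^{j-m}$. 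Rewriting $(\beta-\alpha)^{-l}$ and collecting the constants reproduces precisely the claimed expression.

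The computation is entirely mechanical, so I do not anticipate a genuine obstacle; the only point demanding care is the bookkeeping of the summation bounds. Specifically, one must check that raising the lower limits to $l=m$ and $j=m$ discards only vanishing terms and that the resulting index ranges $m \leq j \leq l \leq n-2k$ remain consistent. This consistency is exactly the condition $m \leq j \leq n$ recorded in the hypothesis of the statement, which confirms that the formula is nonvacuous precisely in that range.
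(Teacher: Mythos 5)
Your proposal is correct and follows exactly the route the paper takes: its proof of Lemma~\ref{marz} consists of the single remark that the result ``is easily obtained from Eq.~\eqref{1},'' i.e.\ term-by-term application of $D^m$ to the power-series representation, which is precisely what you carry out. You simply make explicit the details the paper suppresses --- the rule $D^m x^j=\frac{j!}{(j-m)!}x^{j-m}$, the cancellation of the $j!$ factors, and the raising of the lower summation limits to $l=m$ and $j=m$.
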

\begin{proof}
The proof is easily obtained from Eq.~\eqref{1}.
\end{proof}
The shifted Legendre polynomials satisfy the following relation
\begin{equation}
\int_{\alpha}^\beta L_{\scs{\alpha,\beta,i}}(x) L_{\scs{\alpha,\beta,k}}(x)\mathrm{d}x=\delta_{ik}(k+\frac{1}{2})^{-1},
\end{equation}
i.e., they are orthogonal with each other concerning the unit weight function.

The shifted Legendre basis form an orthogonal system of polynomials, which is complete in the space of square integrable functions, i.e., ${\mathcal L}^2(\alpha, \beta)$. Therefore, any $u\in {\mathcal L}^2(\alpha,\beta)$ can be written as
  \begin{equation*}
  u(x)=\sum_{k=0}^\infty \hat{u}_{\scs k} L_{\scs{\alpha,\beta,k}}(x),
  \end{equation*}
   where
  \begin{equation*}
  \hat{u}_{\scs k}=(k+\frac{1}{2})\int_{\alpha}^\beta u(x)L_{\scs{\alpha,\beta,k}}(x)\mathrm{d}x, \quad k\geq 0.
  \end{equation*}\\
 The associated inner product and norm are given by
  \begin{equation*}
  (f,g)=\int_{\alpha}^\beta f(x)g(x)\mathrm{d}x
  \end{equation*}
and
\begin{equation*}
 ||f||^2_{{\mathcal L}^2(\alpha,\beta)}=(f,f)=\int_{\alpha}^{\beta}|f(t)|^2\mathrm{d}t.
\end{equation*}

We define $H^m(\alpha,\beta)$ to be the vector space of the functions $g\in {\mathcal L}^2(\alpha, \beta)$ such that all the distributional derivatives of $f$ of order up to $m$ can be represented by functions in ${\mathcal L}^2(\alpha, \beta)$. $H^m(\alpha,\beta)$ is endowed with the norm
\begin{equation*}
||f||^2_{H^m(\alpha,\beta)}=\sum_{k=0}^{m}\Big|\Big| \frac{\partial^k}{\partial x^k}f(x) \Big|\Big|^2_{{\mathcal L}^2(\alpha,\beta)}.
\end{equation*}
Furthermore, the associated semi-norm is defined as follows
\begin{equation*}
| f |^2_{H^{m:N}(\alpha,\beta)}=\sum_{j=\min(m,N)}^{m}|| f^{(j)}||^2_{{\mathcal L}^2(\alpha,\beta)},
\end{equation*}
where $N$ is the number of nodal bases.

  In the later sections of this paper, we will use the Gaussian integration formula to approximate integrals such as
\begin{equation}
\int_{\alpha}^{\beta}f(x)\mathrm{d}x.
\end{equation}
 Explicit formulas for the quadrature nodes and weights for discrete shifted Chebyshev and Legendre basis are~\cite{16}
\begin{itemize}
   \item Chebyshev Gauss-Lobatto\\
   For $j=0,1,\cdots,N$,
   \begin{equation}\label{nodeC}
      x_{\scs {\alpha, \beta, N, j}}=\frac{\beta-\alpha}{2}(x_{\scs{N, j}}-1)+\beta, w_{\scs {\alpha, \beta, N, j}}=
      \begin{cases}
        \frac{\pi}{2N},&j=0, N,\\
        \frac{\pi}{N},&j=1,\cdots,N-1,
      \end{cases}
   \end{equation}
  where
  \[
  x_{\scs{N, j}}=\cos\frac{j\pi}{N}.
  \]
   \item  Legendre  Gauss-Lobatto\\
   \begin{equation}\label{nodeL}
x_{\scs{\alpha, \beta, 0}}=\alpha,\, x_{\scs {\alpha, \beta, N}}=\beta,\, x_{\scs{\alpha, \beta, j}} (j=1, 2, \cdots, N-1){\mbox{  roots of }} L^{'}_{\scs{\alpha,\beta,N}}(x),
\end{equation}
and
\begin{equation*}
w_{\scs{\alpha,\beta,N,j}}=\frac{2}{N(N+1)[L_{\scs{\alpha,\beta,N}}(x_{\scs j})]^2},\quad j=0, 1, \cdots, N.
\end{equation*}
 \end{itemize}
 For any $p(x)\in\mathbb{P}_{2N+1}$, where $\mathbb{P}_{2N+1}$ is the space of polynomials of degree at most $2N+1$, we have
\begin{align*}
 \int_\alpha^\beta \frac{p(x)}{\sqrt{1-x^2}} \mathrm{d}x&=\sum_{j=0}^Np(x_{\scs j})w_{\scs{\alpha,\beta,N,j}},&(\text{Chebyshev Gauss-Lobatto}), \\
 {\mbox{and}}\quad \int_\alpha^\beta p(x)\mathrm{d}x&=\sum_{j=0}^Np(x_{\scs j})w_{\scs{\alpha,\beta,N,j}},&(\text{Legendre Gauss-Lobatto}).
\end{align*}

\section{Fractional derivatives of collocation bases}
The shifted Chebyshev basis' fractional derivatives have been discussed in Ref.~\cite{178}.
This section continues with obtaining a novel formula for fractional derivatives of shifted Legendre basis $L_{\scs{\alpha,\beta,n}}(x)$ in the Caputo sense~\cite{2p}. Similar to the shifted Chebyshev basis~\cite{Mousa} one can find the following lemma and theorem.
\begin{lem}
Let $\nu$ be a positive real number. Then fractional derivative of order $\nu$ of shifted Legendre polynomials $L_{\scs{\alpha,\beta,n}}(x)$ can be given by
\begin{equation}
D^\nu L_{\scs{\alpha,\beta,n}}(x)=0,\quad n=0, 1, \cdots, \lceil \nu \rceil-1.
\end{equation}
\end{lem}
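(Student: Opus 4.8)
The plan is to reduce the claim directly to the defining formula of the Caputo derivative. Writing $m=\lceil\nu\rceil$, so that $m-1<\nu\leq m$, the Caputo fractional derivative of order $\nu$ of a sufficiently smooth function $f$ on $[\alpha,\beta]$ is
\begin{equation*}
D^\nu f(x)=\frac{1}{\Gamma(m-\nu)}\int_{\alpha}^{x}(x-s)^{m-\nu-1}\,D^m f(s)\,\mathrm{d}s.
\end{equation*}
The essential observation is that this expression depends on $f$ only through its $m$-th \emph{integer} order derivative $D^m f$, so any information about $f$ of polynomial degree strictly below $m$ is annihilated before the fractional integration is performed.

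Next I would invoke this degree count for the shifted Legendre basis. For each fixed $n$ the polynomial $L_{\scs{\alpha,\beta,n}}(x)$ has degree exactly $n$, as is visible from the power-series representation \eqref{1}. Consequently, whenever $n\leq m-1=\lceil\nu\rceil-1$ we have $n<m$, and therefore $D^m L_{\scs{\alpha,\beta,n}}(x)\equiv 0$. This can be read off immediately from Lemma~\ref{marz}: taking $m>n$ forces the summation index $j$ (which is constrained by $m\leq j\leq n$) to range over an empty set, so the triple sum giving $D^m L_{\scs{\alpha,\beta,n}}(x)$ is empty and hence vanishes identically on $[\alpha,\beta]$.

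The final step is simply to substitute $f=L_{\scs{\alpha,\beta,n}}$ into the Caputo integral. Since the integrand $(x-s)^{m-\nu-1}\,D^m L_{\scs{\alpha,\beta,n}}(s)$ is identically zero for all $s$, the integral vanishes, and we conclude $D^\nu L_{\scs{\alpha,\beta,n}}(x)=0$ for $n=0,1,\dots,\lceil\nu\rceil-1$.

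There is no substantive obstacle here; the argument is essentially immediate once the Caputo definition is written down. The only point requiring a little care is the bookkeeping of $m$ relative to $\nu$: one must use $m=\lceil\nu\rceil$ (not $\lfloor\nu\rfloor$ or some other choice) so that the range $n\leq \lceil\nu\rceil-1$ exactly matches the polynomials whose $m$-th derivative vanishes. Verifying that this choice is consistent at integer values of $\nu$—where the Caputo derivative reduces to the classical derivative and the statement becomes the familiar fact that the $\nu$-th derivative of a polynomial of degree below $\nu$ is zero—serves as a useful sanity check.
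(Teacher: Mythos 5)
Your proof is correct, but it takes a different route from the one the paper (implicitly) follows. The paper offers no written proof of this lemma: it appeals to the analogy with the shifted Chebyshev case of Ref.~\cite{Mousa}, and the argument it has in mind is the one visible in the proof of Theorem~\ref{th3} --- expand $L_{\scs{\alpha,\beta,n}}(x)$ in powers of $x$ via \eqref{1} and apply, monomial by monomial, the rule $D^\nu x^k=0$ for $k<\lceil\nu\rceil$, so that for $n\leq\lceil\nu\rceil-1$ every term of the expansion is annihilated. You instead argue structurally from the Caputo integral representation: since $\deg L_{\scs{\alpha,\beta,n}}=n<m=\lceil\nu\rceil$, the integer derivative $D^m L_{\scs{\alpha,\beta,n}}$ vanishes identically (your reading of Lemma~\ref{marz} as an empty triple sum when $m>n$ is a legitimate way to see this), and hence the fractional integral of it is zero. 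Your version buys generality --- it shows at once that the Caputo derivative of order $\nu$ kills \emph{every} polynomial of degree below $\lceil\nu\rceil$, with no dependence on the Legendre coefficients --- while the paper's monomial route has the advantage of reusing exactly the machinery needed anyway for Theorem~\ref{th3}. Two small points of care in your write-up: the integral representation with kernel $(x-s)^{m-\nu-1}$ is the definition only for non-integer $\nu$ (for $\nu=m\in\mathbb{N}$ the Caputo derivative is $D^m$ itself, a case your closing sanity check correctly disposes of), and the base point of the integral ($\alpha$ in your formula, $0$ in the paper's Appendix~\ref{Ap3}) is immaterial here since the integrand vanishes identically either way.
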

\begin{thm}\label{th3}
The fractional derivative of order $\nu$ of the shifted Legendre basis is
\begin{equation}\label{3.2}
D^\nu L_{\scs{\alpha,\beta,n}}(x)=\sum_{i=0}^\infty S_\nu(n,i) L_{\scs{\alpha,\beta,i}}(x), \quad n=\lceil \nu \rceil, \lceil \nu \rceil+1, \cdots,
\end{equation}
where the ceiling function $\lceil \nu \rceil$ stands for the smallest integer greater than or equal to $\nu$ and
\begin{equation}\label{ss}
S_\nu(n,i)=\displaystyle\sum_{k=0}^{[\frac{n}{2}]}\sum_{l=\lceil \nu \rceil}^{n-2k}\sum_{j=\lceil \nu \rceil}^{l}\frac{(-1)^{k-j+l}2^{l-n}\beta^{l-j}(\beta-\alpha)^{-l}(2n-2k)!j!}{(n-k)!k!(n-2k-l)!j!(l-j)!\Gamma(j+1-\nu)}c_{ij},
\end{equation}
in which
\begin{equation}\label{cij}
c_{\scs{ij}}=(i+\frac{1}{2})\int_\alpha^\beta x^{j-\nu}L_{\scs{\alpha,\beta,i}}(x)\mathrm{d}x.
\end{equation}
\end{thm}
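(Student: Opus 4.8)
The plan is to apply the Caputo operator $D^\nu$ directly to the explicit power-series representation \eqref{1} of $L_{\scs{\alpha,\beta,n}}(x)$ and then re-expand the resulting fractional powers of $x$ back into the shifted Legendre basis. Since \eqref{1} is a finite triple sum of monomials, linearity of $D^\nu$ lets me differentiate term by term with no convergence concern at this stage, exactly as Lemma~\ref{marz} did for the integer orders.

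First I would invoke the action of the Caputo derivative on a monomial: writing $m=\lceil\nu\rceil$, one has $D^\nu x^j=0$ for every nonnegative integer $j<m$, while for $j\geq m$
\[
D^\nu x^j=\frac{\Gamma(j+1)}{\Gamma(j+1-\nu)}\,x^{j-\nu}=\frac{j!}{\Gamma(j+1-\nu)}\,x^{j-\nu}.
\]
Applying this inside \eqref{1} annihilates every summand with $j\leq m-1$. Because the inner indices obey $j\leq l\leq n-2k$, the surviving terms automatically satisfy $j\geq m$ and $l\geq m$, which is precisely why the lower limits of the $l$- and $j$-summations rise from $0$ to $\lceil\nu\rceil$. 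The multiplier $j!/\Gamma(j+1-\nu)$ introduced by the derivative meets the $1/j!$ already present in \eqref{1}; recording both factors without cancellation is what produces the matching $j!$ in the numerator and denominator of \eqref{ss}. At this point $D^\nu L_{\scs{\alpha,\beta,n}}(x)$ is a finite linear combination of powers $x^{j-\nu}$ whose coefficients are exactly the fractions appearing in \eqref{ss} stripped of the factor $c_{\scs{ij}}$.

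The final step is to expand each power $x^{j-\nu}$ in the orthogonal family $\{L_{\scs{\alpha,\beta,i}}\}_{i\geq0}$. Since this system is complete in ${\mathcal L}^2(\alpha,\beta)$, I may write $x^{j-\nu}=\sum_{i=0}^\infty c_{\scs{ij}}L_{\scs{\alpha,\beta,i}}(x)$, and the coefficient rule coming from the orthogonality relation gives precisely $c_{\scs{ij}}=(i+\tfrac12)\int_\alpha^\beta x^{j-\nu}L_{\scs{\alpha,\beta,i}}(x)\,\mathrm{d}x$, which is \eqref{cij}. Substituting this series, interchanging the finite sum over $(k,l,j)$ with the series in $i$, and collecting the coefficient of each $L_{\scs{\alpha,\beta,i}}(x)$ produces $S_\nu(n,i)$ as written in \eqref{ss}, establishing \eqref{3.2}.

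The only genuinely delicate point I anticipate is justifying the summation interchange and the sense in which \eqref{3.2} holds. Because the sums over $(k,l,j)$ are finite, the interchange amounts to swapping a finite linear combination with finitely many ${\mathcal L}^2$-convergent series, which is automatic, so \eqref{3.2} is naturally read as an identity in ${\mathcal L}^2(\alpha,\beta)$. I would also confirm that $x^{j-\nu}$ is square integrable on $[\alpha,\beta]$ so that each $c_{\scs{ij}}$ is well defined, which holds as long as the interval and the exponents keep $x^{j-\nu}\in{\mathcal L}^2(\alpha,\beta)$. Every remaining manipulation is simply bookkeeping of the constants inherited verbatim from \eqref{1}.
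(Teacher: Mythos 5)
Your proposal follows essentially the same route as the paper's own proof: apply the Caputo derivative of monomials term by term to the power form \eqref{1}, observe that the terms with $j<\lceil\nu\rceil$ vanish so the lower limits of the $l$- and $j$-sums rise to $\lceil\nu\rceil$, and then re-expand each $x^{j-\nu}$ in the shifted Legendre basis with the orthogonality coefficients \eqref{cij}. Your closing remarks on the $\mathcal{L}^2$ sense of the identity and the finite-sum/series interchange merely make explicit what the paper leaves implicit, so the argument is correct and equivalent.
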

\begin{proof}
As we know, the Caputo fractional derivative of $x^k$ of order $\nu$ is
\[
D^\nu x^k =\left\{
\begin{array}{ll}
0,& k=0, 1, \cdots \text{ and} \ k<\lceil \nu \rceil,\\
\frac{\Gamma(k+1)}{\Gamma(k+1-\nu)}x^{k-\nu},&k=0, 1, \cdots \text{ and} \ k\geq \lceil \nu \rceil.
\end{array}\right.
\]
 Considering~\eqref{1}, for $n=\lceil \nu \rceil, \lceil \nu \rceil+1, \cdots,$ we obtain
\begin{align}
D^\nu L_{\scs{\alpha,\beta,n}}(x)=&\sum_{k=0}^{[\frac{n}{2}]}\sum_{l=0}^{n-2k}\sum_{j=0}^{l}\frac{(-1)^{k-j+l}2^{l-n}\beta^{l-j}(\beta-\alpha)^{-l}(2n-2k)!}{(n-k)!k!(n-2k-l)!j!(l-j)!}D^\nu x^j\nonumber \\
=&\sum_{k=0}^{[\frac{n}{2}]}\sum_{l=\lceil \nu \rceil}^{n-2k}\sum_{j=\lceil \nu \rceil}^{l}\frac{(-1)^{k-j+l}2^{l-n}\beta^{l-j}(\beta-\alpha)^{-l}(2n-2k)!}{(n-k)!k!(n-2k-l)!(l-j)!\Gamma(j+1-\nu)}x^{j-\nu}\nonumber.
\end{align}
By expanding $x^{j-\nu}$ in terms of the shifted Legendre basis, we arrive at the following form
\begin{equation*}
x^{j-\nu} = \sum_{i=0}^\infty c_{\scs{ij}}L_{\scs{\alpha,\beta,j}}(x),
\end{equation*}
where $c_{ij}$ is given in~\eqref{cij}, which completes the proof.
\end{proof}

\section{Mixed steps-collocation schemes}
In this section, we propose new numerical schemes based on the method of steps and Legendre (Chebyshev) collocation method to solve the following nonlinear FDDE
\begin{equation}\label{Asli}
\left\{
\begin{array}{lc}
\sum\limits_{j=0}^{n} A_ju^{(j)}(t)+D^\nu u(t)+\sum_{i=1}^l\lambda_iD^{\nu_i}u(t)=f(t,u(t),u(t-\tau)),&t\geq 0,\\
u(t)=\phi(t),& -\tau\leq t\leq 0,\\
u'(0)=\phi'(0)=\phi_1,&\\
\hspace{0.7cm}\vdots&\\
u^{(\mu-1)}(0)=\phi^{(\mu-1)}(0)=\phi_{\mu-1},&
\end{array}\right.
\end{equation}
where $\lambda_i$, $A_j\in\mathbb{R}$ are constants and $A_n\neq 0$, $0<\nu_1<\nu_2<\cdots<\nu_l<\nu$, $m-1<\nu\leq m$ are real constants, $D^\nu u(t)\equiv u^{\nu}(t)$ represents the Caputo fractional derivative of order $\nu$ of function $u(t)$, $\mu=\max\{m, n\}$ and the function $f$ is given  nonlinear continuous function in $u$ which satisfies the following Lipschitz conditions
\begin{align}
|f(t,y_1,u)-f(t,y_2,u)|&\leq L_1|y_1-y_2|\label{lip},\\
|f(t,y,u_1)-f(t,y,u_2)|&\leq L_2|u_1-u_2|.\label{lip2}
\end{align}
Also, throughout this paper, we shall assume the initial function $\phi(t)$ to be continuous on $[-\tau,0]$. These conditions guarantee the existence and uniqueness of the solution of problem~(\ref{Asli}) (see, e.g.,~\cite{17,18}).

Clearly, for $t\in[0,\tau]$, the nonlinear FDDE~(\ref{Asli}) is equivalent to the following nonlinear non-delay FDEs
\begin{equation}\label{Asli1}
\left\{
\begin{array}{lc}
\sum\limits_{j=0}^{n} A_ju^{(j)}(t)+D^\nu u(t)+\sum_{i=1}^l\lambda_iD^{\nu_i}u(t)=f(t,u(t),\phi(t-\tau)),&t\in (0,\tau],\\
u(0) = \phi(0)=\phi_0,&\\
u'(0)=\phi'(0)=\phi_1,&\\
\hspace{0.7cm}\vdots&\\
u^{(\mu-1)}(0)=\phi^{(\mu-1)}(0)=\phi_{\mu-1}.&
\end{array}\right.
\end{equation}
One way of solving the nonlinear FDE~(\ref{Asli1}) is to use shifted Chebyshev basis, which is an extension of the scheme presented in Ref.~\cite{Mousa}. Another way is to
expand the approximate solution $u_{\scs N}(t)$ in terms of truncated shifted Legendre basis. The latter idea leads to
\begin{equation}\label{3.3}
u_{\scs N}(t)=\sum_{k=0}^Na_{\scs k}L_{\scs{0,\tau ,k}}(t),\qquad  t\in[0,\tau],
\end{equation}
where $a_{\scs k}\in \mathbb{R}$ are the unknown coefficients that we aim to find them. Thanks to Theorem~\ref{th3}, we can express the derivatives $u^{(1)}(t)$, $u^{(2)}(t)$, $\cdots, u^{(n)}(t)$, $D^\nu u(t)$, $D^{\nu_1}u(t)$, $D^{\nu_2}u(t), \cdots, D^{\nu_l}u(t)$ in terms of unknowns coefficients $a_{\scs k}$.

Now, we employ the Legendre (Chebyshev) collocation method to solve \eqref{Asli1} numerically. To do this, the following equation
 \begin{equation}\label{Asli2}
\sum\limits_{j=0}^{n} A_ju{\scs N}^{(j)}(t)+D^\nu u_{\scs N}(t)+\sum_{i=1}^l\lambda_iD^{\nu_i}u_{\scs N}(t)=f(t,u_{\scs N}(t),\phi(t-\tau)),
\end{equation}
 must be satisfied at the shifted Legendre collocation nodes~\eqref{nodeL} (shifted Chebyshev collocation nodes~\eqref{nodeC}) exactly. In fact, by using \eqref{3.2} and \eqref{3.3}, for $j=0, 1, \cdots, N-\mu$, we get the following equations
 \begin{align}\label{Asli3}
\sum\limits_{j=0}^{n}\sum_{k=0}^N A_ja_{\scs k}L^{(j)}_{\scs{0,\tau ,k}}(t_{\scs{\alpha, \beta, j}})+ \sum_{k=0}^Na_{\scs k}D^\nu& L_{\scs{0,\tau ,k}}(t_{\scs{\alpha, \beta, j}})+\sum_{i=1}^l\sum_{k=0}^N\lambda_ia_{\scs k}D^{\nu_i}L_{\scs{0,\tau ,k}}(t_{\scs{\alpha, \beta, j}}) \nonumber\\
 =f(t_{\scs{\alpha, \beta, j}},\sum_{k=0}^Na_{\scs k}L_{\scs{0,\tau ,k}}&(t_{\scs{\alpha, \beta, j}}),\phi(t_{\scs{\alpha, \beta, j}}-\tau)),\ t_{\scs{\alpha, \beta, j}}\geq 0,
\end{align}
where $t_{\scs{\alpha, \beta, j}}$ are the same as $x_{\scs{\alpha, \beta, j}}$ which are defined by \eqref{nodeL}.
After imposing the initial conditions
\begin{equation}\label{3.31}
u_{\scs N}^{(i)}(0)=\phi_i,\quad i=0, 1, \cdots, \mu-1,
\end{equation}
we arrive at a nonlinear system of algebraic equations. Similarly, using Theorem~1 in Ref.~\cite{Mousa} and related shifted Chebyshev expansion, we obtain  an algebraic system of nonlinear equations. The nonlinear resulted systems can be solved, for example, by Newton's method. Therefore, the approximate solution $u_{\scs N}$ in the interval $[0, \tau]$ is now available.
To obtain the approximate solution of Eq.~\eqref{Asli} in $[\tau, 2\tau]$, the presented procedure is used. Generally, if we want to solve Eq.~\eqref{Asli} in the interval $[(i-1)\tau, i\tau]$, $i\geq 1$, we need to solve the following equation
\begin{equation}\label{aa1}
\sum\limits_{j=0}^{n} A_j\leftidx{_{\scs{(i)}}}u^{(j)}(t)+D^\nu \leftidx{_{\scs{(i)}}}u(t)+\sum_{p=1}^l\lambda_p D^{\nu_p}\leftidx{_{\scs{(i)}}}u(t)=f(t,\leftidx{_{\scs{(i)}}}u(t),\leftidx{_{\scs{(i-1)}}}u(t-\tau)),
\end{equation}
with the initial conditions
\[
\leftidx{_{\scs{(i)}}}u^{(j)}(0)=\leftidx{_{\scs{(i-1)}}}u^{(j)}(\tau), \quad j=0, 1, \cdots, \mu,
\]
where
\begin{equation*}
\leftidx{_{\scs{(i)}}}u(t)=u((i-1)\tau+t),\quad i\geq 1.
\end{equation*}
Now, using the proposed procedure, we get the approximate solution $\leftidx{_{\scs{(i)}}}u_{\scs N}$ of Eq.~\eqref{aa1}.
\section{Convergence analysis}
In this section, by a similar manner presented in Ref.~\cite{177}, we show that the obtained approximate solutions in the previous section are convergent to the exact solutions. In order to investigate the exponential rate of convergence of the proposed schemes, we consider the nonlinear FDDE~\eqref{aa1} on the interval $\Omega_i=[(i-1)\tau, i\tau]$.

Let us define $\leftidx{_{\scs{(i)}}}e_{\scs N}(t)=\leftidx{_{\scs{(i)}}}u_{\scs N}(t)-\leftidx{_{\scs{(i)}}}u(t)$ to be the error function of the proposed scheme, where $\leftidx{_{\scs{(i)}}}u(t)$ and $\leftidx{_{\scs{(i)}}}u_{\scs N}(t)$ are the exact and Legendre (Chebyshev) collocation solution of \eqref{aa1} at $i-th$ step, respectively. %Throughout this section, the constants $C_i$ and $\gamma_i$ are independent of $N$ and only depend on $n, m$ and $\nu$.

Hereafter, we use the subscript $w$ for the Legendre and Chebyshev weight functions.

The orthogonal projection operator $P_{\scs N}$ from ${\mathcal L}_{\scs w}^2(\Omega)$ onto $\mathbb{P}_{\scs N}$, where $\Omega=[\alpha, \beta]$, satisfies
\begin{equation*}
\forall \psi_N\in \mathbb{P}_{\scs N} , \quad \int_{0}^{\tau}(f-P_{\scs N}f)(r)dr=0,
\end{equation*}
for any function $f$ in ${\mathcal L}_{\scs w}^2(\Omega)$. It is clear that $P_{\scs N}$ belongs to $\mathbb{P}_{\scs N}$.

The following inequalities for the shifted Legendre (Chebyshev) polynomials and shifted Legendre (Chebyshev)-Gauss-Lobatto nodes for $k\geq 1$ can be obtained by a similar argument provided in Ref.~\cite{9a} %(Chapter 5)
 \begin{align}
   || y-P_{\scs N}(y)||_{H^{l}_w(\Omega)} \leq & CN^{2l-1/2-k}|y|_{H^{k:N}_w(\Omega)},\label{hef}
 %  ||I_{\scs N}(y)-y ||_{{\mathcal L}_{\scs w}^2(\Omega)} \leq & CN^{-k}|y|_{H^{k:N}_w(\Omega)}\label{heg},
 \end{align}
where $y\in H_w^k(\Omega)$.% $I_{\scs N}$ is the interpolation operator and $C$ is a constant independent of $N$.

\begin{lem}[\cite{KK}]\label{KK1}
Let $F$ be a continuous function on the real line. Then the $n$-fold multiple integral of $F$ based at $a$ is given by
\[
\int_{a}^{t}\int_{a}^{t_n}\cdots \int_{a}^{t_2}F(t_1)\mathrm{d}T=\frac{1}{(n-1)!}\int_{a}^{t}(t-s)^{n-1}F(s)\mathrm{d}s,
\]
where $\mathrm{d}T=\mathrm{d}t_1\mathrm{d}t_2\cdots \mathrm{d}t_n$.
\end{lem}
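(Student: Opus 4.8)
The plan is to prove this classical Cauchy formula for repeated integration by induction on $n$. First I would dispose of the base case $n=1$, where the claimed identity reduces to $\int_a^t F(t_1)\,\mathrm{d}t_1 = \frac{1}{0!}\int_a^t (t-s)^0 F(s)\,\mathrm{d}s$, which is a tautology since $(t-s)^0 = 1$ and $0! = 1$.

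For the inductive step, I would assume the formula holds for the $(n-1)$-fold integral, namely
\[
\int_{a}^{t_n}\int_{a}^{t_{n-1}}\cdots \int_{a}^{t_2}F(t_1)\,\mathrm{d}t_1\cdots\mathrm{d}t_{n-1}=\frac{1}{(n-2)!}\int_{a}^{t_n}(t_n-s)^{n-2}F(s)\,\mathrm{d}s,
\]
and then integrate both sides once more in the variable $t_n$ from $a$ to $t$. The left-hand side becomes precisely the $n$-fold iterated integral appearing in the statement, so the whole task reduces to evaluating the iterated integral produced on the right.

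The key (and essentially only nontrivial) step is interchanging the order of integration in
\[
\frac{1}{(n-2)!}\int_{a}^{t}\int_{a}^{t_n}(t_n-s)^{n-2}F(s)\,\mathrm{d}s\,\mathrm{d}t_n.
\]
Since $F$ is continuous and the integrand is continuous on the triangle $\{(s,t_n) : a\le s\le t_n\le t\}$, Fubini's theorem applies and I may rewrite the domain as $\{(s,t_n) : a\le s\le t,\; s\le t_n\le t\}$. After swapping, the inner integral is $\int_{s}^{t}(t_n-s)^{n-2}\,\mathrm{d}t_n = \frac{(t-s)^{n-1}}{n-1}$, which combined with the prefactor $\frac{1}{(n-2)!}$ yields $\frac{1}{(n-1)!}\int_a^t (t-s)^{n-1}F(s)\,\mathrm{d}s$, closing the induction.

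I expect no serious obstacle: the computation is routine once the induction is arranged, and the sole point demanding any care is justifying the Fubini interchange, which is immediate from the continuity of $F$. An alternative route would be to note that both sides, viewed as functions of $t$, vanish together with their first $n-1$ derivatives at $t=a$ and share the common $n$-th derivative $F$; uniqueness of such an iterated antiderivative then forces equality. I would nonetheless favor the direct inductive calculation, since it is self-contained and avoids differentiating under the integral sign.
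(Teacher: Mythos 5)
Your proof is correct: the base case, the inductive step, the Fubini interchange over the triangle $\{(s,t_n): a\le s\le t_n\le t\}$ (fully justified by continuity of the integrand on a compact set), and the evaluation $\int_s^t (t_n-s)^{n-2}\,\mathrm{d}t_n=\frac{(t-s)^{n-1}}{n-1}$ all check out, and the prefactors combine correctly to $\frac{1}{(n-1)!}$. Note that the paper offers no proof of this lemma at all --- it is quoted from the cited reference [KK] as a known result (the classical Cauchy formula for repeated integration) --- so there is nothing to compare against; your inductive argument is the standard self-contained proof of that formula, and your remark about the alternative uniqueness-of-antiderivative argument is likewise sound.
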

Now, we present the convergence theorem of the proposed schemes.
\begin{thm}
Suppose that the exact solution $\leftidx{_{\scs{(i)}}}u(t)$ at $i-th$ step of Eq.~\eqref{aa1} is smooth enough, i.e. $\leftidx{_{\scs{(i)}}}u(t)\in H^k_w(\Omega)$ for $ i, k\geq 1$, and the corresponding mixed steps-collocation solution $\leftidx{_{\scs{(i)}}}u_{\scs N}(t)$ is given by shifted Legendre or Chebyshev basis. Then for sufficiently large $N$ we have
\begin{align}
||\leftidx{_{\scs{(i)}}}e_{\scs N}(t)||_{{\mathcal L^2_{\scs w}}(\Omega)}\leq &
 C_1N^{-k}|\leftidx{_{\scs{(i)}}}u|_{H_{\scs w}^{k:N}(\Omega)}+C_2 N^{-3/2}|\leftidx{_{\scs{(i)}}}u|_{H_{\scs w}^{1:N}(\Omega)}\nonumber\\
 &+C_3N^{2(\eta_1 -1)-1/2-k}|\leftidx{_{\scs{(i)}}}u|_{H_{\scs w}^{k:N}}\\
 &+C_{4}N^{2(\eta_2 -1)-1/2-k}|\leftidx{_{\scs{(i)}}}u|_{H_{\scs w}^{k:N}}+C_{5}N^{-3/2},\label{gha}\nonumber
\end{align}
where
 \begin{equation*}
\eta_1=
\begin{cases}
m,& m\leq 3,\\
3,& m>3,
\end{cases}\qquad
 \eta_2=
\begin{cases}
m_p,&m_p\leq 3,\\
3,& m_p>3,
\end{cases}
\end{equation*}
and the constants $C_i$ are independent of $N$ and only depend on $n, m$ and $\nu$.
\end{thm}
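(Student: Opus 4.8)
The plan is to bound the error $\leftidx{_{\scs{(i)}}}e_{\scs N}(t)$ by comparing the collocation solution to the orthogonal projection $P_{\scs N}\leftidx{_{\scs{(i)}}}u$ of the exact solution and then invoking the projection estimate~\eqref{hef}. First I would write the governing equation~\eqref{aa1} in its equivalent integral form: since the highest integer derivative has order $n$ and the highest fractional order is $\nu$ with $m-1<\nu\leq m$ and $\mu=\max\{m,n\}$, I would apply the fractional integral operator $I^\nu$ (or integrate $\mu$ times and use Lemma~\ref{KK1}) to both sides to remove the derivatives, turning the differential problem into a Volterra-type integral equation for $\leftidx{_{\scs{(i)}}}u$ in which the initial data is absorbed into a known polynomial part. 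The collocation solution $\leftidx{_{\scs{(i)}}}u_{\scs N}$ satisfies the same relation with the exact integrand replaced by its interpolant/projection at the Gauss--Lobatto nodes, so subtracting the two equations yields an integral identity for $\leftidx{_{\scs{(i)}}}e_{\scs N}$.

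Next I would split the error as $\leftidx{_{\scs{(i)}}}e_{\scs N} = (\leftidx{_{\scs{(i)}}}u_{\scs N}-P_{\scs N}\leftidx{_{\scs{(i)}}}u)+(P_{\scs N}\leftidx{_{\scs{(i)}}}u-\leftidx{_{\scs{(i)}}}u)$. The second bracket is controlled directly by~\eqref{hef} with $l=0$, giving the leading $C_1N^{-k}$ term. For the first bracket I would estimate each operator appearing in~\eqref{aa1} after integration. Using Lemma~\ref{KK1} to rewrite the repeated integrals of the lower-order integer derivatives as single convolution integrals against $(t-s)^{j-1}$, the integer-derivative contributions produce the $C_2N^{-3/2}$ and $C_5N^{-3/2}$ terms (these arise from projecting first derivatives, hence the exponent $2\cdot1-1/2-\ldots$ pattern specialized to smooth data). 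The fractional terms $D^\nu$ and $D^{\nu_p}$, once integrated, carry the projection error through~\eqref{hef} evaluated at the relevant Sobolev order, and the definitions of $\eta_1,\eta_2$ (capping the effective derivative count $m$ or $m_p$ at $3$) are exactly what converts the $H^l_w$ index in~\eqref{hef} into the exponents $2(\eta_1-1)-1/2-k$ and $2(\eta_2-1)-1/2-k$; this accounts for the $C_3$ and $C_4$ terms. Throughout I would use the Lipschitz hypotheses~\eqref{lip}--\eqref{lip2} on $f$ to bound the difference of the right-hand sides linearly by the error itself (the delay argument $\leftidx{_{\scs{(i-1)}}}u$ is a known function from the previous step, so~\eqref{lip2} contributes only to the constants, not to a coupled unknown).

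Finally I would collect the estimates: the identity for $\leftidx{_{\scs{(i)}}}e_{\scs N}$ has the form $\|\leftidx{_{\scs{(i)}}}e_{\scs N}\|_{{\mathcal L}^2_{\scs w}(\Omega)} \leq (\text{projection terms}) + K\|\leftidx{_{\scs{(i)}}}e_{\scs N}\|_{{\mathcal L}^2_{\scs w}(\Omega)}$, where $K$ gathers the Lipschitz constants $L_1,L_2$ and the operator norms of the integration kernels, all scaled by negative powers of $N$. For $N$ sufficiently large this absorption constant satisfies $K<1$, so I would absorb the self-referential term onto the left (a discrete Gronwall / fixed-point absorption step) to arrive at~\eqref{gha}. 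I expect the main obstacle to be the fractional-integral estimates: controlling the $\mathcal{L}^2_{\scs w}$-norm of $I^\nu$ and $I^{\nu_p}$ applied to the projection error requires a stability bound for the fractional integral operator on the weighted space and a careful matching of the Sobolev regularity index used in~\eqref{hef}, which is precisely where the truncation thresholds $\eta_1,\eta_2$ must be justified; establishing that these operator bounds hold with constants independent of $N$ (depending only on $n,m,\nu$) is the technical heart of the argument.
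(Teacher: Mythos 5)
Your overall route --- integral reformulation via Lemma~\ref{KK1}, comparison with the projection $P_{\scs N}\leftidx{_{\scs{(i)}}}u$, the estimate \eqref{hef}, and the Lipschitz conditions \eqref{lip}--\eqref{lip2} --- matches the paper's, but your closing step fails as stated. After subtracting the exact and discrete integral equations one obtains a Volterra-type inequality of the form \eqref{bis7}: $|\leftidx{_{\scs{(i)}}}e_{\scs N}(t)|\leq \gamma_1\int_{(i-1)\tau}^{t}|\leftidx{_{\scs{(i)}}}e_{\scs N}(s)|\,\mathrm{d}s+(\text{projection terms})$, where $\gamma_1$ collects the kernel bounds $|A_j/A_n|\tau^{n-1-j}/(n-1-j)!$ and, through \eqref{lip}, the Lipschitz constant $L_1$: the piece $f(s,\leftidx{_{\scs{(i)}}}u_{\scs N},\cdot)-f(s,\leftidx{_{\scs{(i)}}}u,\cdot)$ inside $\leftidx{_{\scs{(i)}}}e_f$ is bounded by $L_1|\leftidx{_{\scs{(i)}}}e_{\scs N}(s)|$ with a constant carrying \emph{no} negative power of $N$. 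So your claim that the absorption constant $K$ is ``scaled by negative powers of $N$'' and hence satisfies $K<1$ for large $N$ is false in general; a norm-level absorption $\|e_{\scs N}\|\leq(\text{proj})+K\|e_{\scs N}\|$ only works if $L_1$, the $A_j$, or the step length $\tau$ happen to be small. The paper instead applies the Gronwall lemma to the \emph{pointwise} Volterra inequality \eqref{bis7} --- which tolerates arbitrarily large $\gamma_1$, since the factor $e^{\gamma_1\tau}$ is simply swallowed into the constants --- and only then passes to ${\mathcal L}^2_{\scs w}(\Omega)$ norms via the generalized Hardy inequality. Repairing your argument requires this pointwise Gronwall step (equivalently, the vanishing spectral radius of the Volterra operator), not largeness of $N$.

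The second gap is that the caps $\eta_1=\min(m,3)$, $\eta_2=\min(m_p,3)$ are asserted rather than derived, and you yourself flag the needed operator bounds as unresolved; in the paper they come from a concrete mechanism you are missing. After the $n$-fold integration, the fractional terms are double integrals of $(s-s_1)^{m-\nu-1}\leftidx{_{\scs{(i)}}}e_{\scs N}^{(m)}(s_1)$; integrating by parts $(n-1)$ times in the outer variable gives \eqref{si3} (boundary terms vanish upon exchanging the order of integration), and, when $m\geq 4$, a further $(m-3)$ integrations by parts in $s_1$ reduce $\leftidx{_{\scs{(i)}}}e_{\scs N}^{(m)}$ to $\leftidx{_{\scs{(i)}}}e_{\scs N}^{(3)}$ as in \eqref{si5} --- legitimately, because the collocation scheme imposes the initial data \eqref{3.31} exactly, so the error has homogeneous initial conditions at $(i-1)\tau$ and the boundary terms drop. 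For $m\leq 3$ no reduction is available, whence the cap at $3$; the surviving seminorm has order $\eta_1-1$, and \eqref{hef} with $l=\eta_1-1$ produces exactly $N^{2(\eta_1-1)-1/2-k}$ (cf.~\eqref{bis9}, where $7/2-k=2\cdot 2-1/2-k$). This reduction also needs the side conditions $n+1>\nu$, and $m-\nu+n>2$, $m_p-\nu_p+n>2$ in the low-order cases, which your sketch omits. Finally, your attribution of the $N^{-3/2}$ terms to the integer-order derivatives is off: in the paper, $C_2N^{-3/2}|\leftidx{_{\scs{(i)}}}u|_{H^{1:N}_w}$ comes from the projection errors $\leftidx{_{\scs{(i)}}}e_{P_{\scs N}}$, $\leftidx{_{\scs{(i)}}}e_{P^p_N}$ of the fractional integral terms (\eqref{mosh}--\eqref{si1}), controlled via the $N$-uniform boundedness of $D^\nu$, $D^{\nu_p}$ on $\mathbb{P}_N$ (\eqref{bou1}--\eqref{bou2}, Appendix~C) --- precisely the operator bound you called the technical heart but did not supply --- while $C_5N^{-3/2}$ comes from the projection part of $\leftidx{_{\scs{(i)}}}e_f$.
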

\begin{proof}
As $\leftidx{_{\scs{(i)}}}u_{\scs N}(t)$ is the mixed steps-collocation solution of Eq.~\eqref{aa1} on the interval $\Omega_i$, it satisfies the following equation
\begin{align*}
\sum_{j=0}^{n}A_j \leftidx{_{\scs{(i)}}}u_{\scs N}^{(j)}(t)+P_{\scs N}\big( \frac{1}{\Gamma(m-\nu)}&\int_{(i-1)\tau}^{t}(t-s)^{m-\nu-1}\leftidx{_{\scs{(i)}}}u_{\scs N}^{(m)}(s)\mathrm{d}s\big)\\
+P_{\scs N}\big( \sum_{p=1}^l\frac{\lambda_p}{\Gamma(m_p-\nu_p)}&\int_{(i-1)\tau}^t(t-s)^{m_p-\nu_p-1}\leftidx{_{\scs{(i)}}}u_{\scs N}^{(m_p)}(s)\mathrm{d}s \big)\\
=P_{\scs N}\big(f(t,\leftidx{_{\scs{(i)}}}u_{\scs N}(t),&\leftidx{_{(i-1)}}u_{\scs N}(t))\big).
\end{align*}
By $n$-times integration of the above expression, we have
\begin{align}
 \int_{(i-1)\tau}^{t}\int_{(i-1)\tau}^{t_n}\cdots \int_{(i-1)\tau}^{t_2}A_n\leftidx{_{\scs{(i)}}}u_{\scs N}^{(n)}(t_1)\mathrm{d}T&+\sum_{j=0}^{n-1}\int_{(i-1)\tau}^{t}\int_{(i-1)\tau}^{t_n}\cdots \int_{(i-1)\tau}^{t_2}A_j\leftidx{_{\scs{(i)}}}u_{\scs N}^{(j)}(t_1)\mathrm{d}T\nonumber  \\
  +\frac{1}{\Gamma(m-\nu)}\int_{(i-1)\tau}^{t}\int_{(i-1)\tau}^{t_n}\cdots \int_{(i-1)\tau}^{t_2}P_{\scs N}&\big(\int_{(i-1)\tau}^{t_1}(t_1-s)^{m-\nu-1}\leftidx{_{\scs{(i)}}}u_{\scs N}^{(m)}(s)\mathrm{d}s\big)\mathrm{d}T\nonumber\\
  +\sum_{p=1}^l\frac{\lambda_p}{\Gamma(m_p-\nu_p)}\int_{(i-1)\tau}^{t}\int_{(i-1)\tau}^{t_n}\cdots \int_{(i-1)\tau}^{t_2}&P_{\scs N}\big(\int_{(i-1)\tau}^{t_1}(t_1-s)^{m_p-\nu_p-1}\leftidx{_{\scs{(i)}}}u_{\scs N}^{(m_p)}(s)\mathrm{d}s\big)\mathrm{d}T\nonumber\\
  =\int_{(i-1)\tau}^{t}\int_{(i-1)\tau}^{t_n}\cdots \int_{(i-1)\tau}^{t_2}P_{\scs N}\big(f(t_1,\leftidx{_{\scs{(i)}}}u_{\scs N}(t_1),&\leftidx{_{(i-1)}}u_{\scs N}(t_1))\big)\mathrm{d}T.&\label{mul}
\end{align}
 Thanks to Lemma~\ref{KK1}, we can rewrite each of multiple integrals appeared in~\eqref{mul} as a single integral~\cite{177}
\begin{align}
A_n\leftidx{_{\scs{(i)}}}u_{\scs N}(t)+Q_0(t)+\sum_{j=0}^{n-1}\int_{(i-1)\tau}^{t}\frac{A_j}{(n-1-j)!}(t-s)^{n-1-j}&\leftidx{_{\scs{(i)}}}u_{\scs N}(s)\mathrm{d}s\nonumber\\
+\frac{1}{\Gamma(m-\nu)}\int_{(i-1)\tau}^{t}\frac{(t-s)^{n-1}}{(n-1)!}P_{\scs N}\big(\int_{(i-1)\tau}^{s}(s-s_1)^{m-\nu -1}&\leftidx{_{\scs{(i)}}}u_{\scs N}^{(m)}(s_1)\mathrm{d}s_1 \big)\mathrm{d}s\nonumber\\
+\sum_{p=1}^l\frac{\lambda_p}{\Gamma(m_p-\nu_p)}\int_{(i-1)\tau}^{t}\frac{(t-s)^{n-1}}{(n-1)!}P_{\scs N}\big(\int_{(i-1)\tau}^{s}(s-s_1)^{m_p-\nu_p -1}&\leftidx{_{\scs{(i)}}}u_{\scs N}^{(m_p)}(s_1)\mathrm{d}s_1 \big)\mathrm{d}s  \nonumber\\
=\int_{(i-1)\tau}^{t}\frac{(t-s)^{n-1}}{(n-1)!}P_{\scs N}\big(f(s,\leftidx{_{\scs{(i)}}}u_{\scs N}(s),\leftidx{_{(i-1)}}u_{\scs N}(s))\big)\mathrm{d}s,&\label{21}
\end{align}
where $Q_0(t)$ contains initial conditions. Similarly, the exact solution $\leftidx{_{\scs{(i)}}}u(t)$ satisfies the following relation
\begin{align}
A_n\leftidx{_{\scs{(i)}}}u(t)+Q_0(t)+\sum_{j=0}^{n-1}\int_{(i-1)\tau}^{t}\frac{A_j}{(n-1-j)!}(t-s)^{n-1-j}&\leftidx{_{\scs{(i)}}}u(s)\mathrm{d}s\nonumber\\
+\frac{1}{\Gamma(m-\nu)}\int_{(i-1)\tau}^{t}\frac{(t-s)^{n-1}}{(n-1)!}\int_{(i-1)\tau}^{s}(s-s_1)^{m-\nu -1}&\leftidx{_{\scs{(i)}}}u^{(m)}(s_1)\mathrm{d}s_1\mathrm{d}s\nonumber\\
+\sum_{p=0}^l\frac{\lambda_p}{\Gamma(m_p-\nu_p)}\int_{(i-1)\tau}^{t}\frac{(t-s)^{n-1}}{(n-1)!}\int_{(i-1)\tau}^{s}(s-s_1)^{m_p-\nu_p -1}&\leftidx{_{\scs{(i)}}}u^{(m_p)}(s_1)\mathrm{d}s_1\mathrm{d}s  \nonumber\\
=\int_{(i-1)\tau}^{t}\frac{(t-s)^{n-1}}{(n-1)!}f(s,\leftidx{_{\scs{(i)}}}u(s),\leftidx{_{(i-1)}}u(s))\mathrm{d}s.&\label{22}
\end{align}
Subtracting \eqref{22} from \eqref{21} leads to
\begin{align}
A_n\leftidx{_{\scs{(i)}}}e_{\scs N}(t)&+\sum_{j=0}^{n-1}\frac{A_j}{(n-1-j)!}\int_{(i-1)\tau}^{t}(t-s)^{n-1-j}\leftidx{_{\scs{(i)}}}e_{\scs N}(s)\mathrm{d}s\nonumber\\
&+\frac{1}{\Gamma(m-\nu)}\int_{(i-1)\tau}^{t}\frac{(t-s)^{n-1}}{(n-1)!}\leftidx{_{\scs{(i)}}}e_{P_{\scs N}}\mathrm{d}s   \nonumber\\
&+\frac{1}{\Gamma(m-\nu)}\int_{(i-1)\tau}^{t}\frac{(t-s)^{n-1}}{(n-1)!}\int_{(i-1)\tau}^{s}(s-s_1)^{m-\nu-1}\leftidx{_{\scs{(i)}}}e_{\scs N}^{(m)}(s_1)\mathrm{d}s_1\mathrm{d}s\nonumber\\
&+\sum_{p=1}^l\frac{\lambda_p}{\Gamma(m_p-\nu_p)}\int_{(i-1)\tau}^{t}\frac{(t-s)^{n-1}}{(n-1)!}\leftidx{_{\scs{(i)}}}e_{P^p_N}\mathrm{d}s\nonumber\\
&\hspace{-1cm}+\sum_{p=1}^l\frac{\lambda_p}{\Gamma(m_p-\nu_p)}\int_{(i-1)\tau}^{t}\frac{(t-s)^{n-1}}{(n-1)!}\int_{(i-1)\tau}^{s}(s-s_1)^{m_p-\nu_p -1}\leftidx{_{\scs{(i)}}}e_{\scs N}^{(m_p)}(s_1)\mathrm{d}s_1\mathrm{d}s\nonumber\\
&=\int_{(i-1)\tau}^{t}\frac{(t-s)^{n-1}}{(n-1)!}\leftidx{_{\scs{(i)}}}e_f(s)\mathrm{d}s,\label{si7}
\end{align}
where
\begin{align*}
\leftidx{_{\scs{(i)}}}e_f(s)&=P_{\scs N}(f(s,\leftidx{_{\scs{(i)}}}u_{\scs N}(s),\leftidx{_{(i-1)}}u_{\scs N}(s)))-f(s,\leftidx{_{\scs{(i)}}}u(s),\leftidx{_{(i-1)}}u(s)),\\
\leftidx{_{\scs{(i)}}}e_{\scs P_{\scs N}}(s)&=P_{\scs N}\big( \int_{(i-1)\tau}^{s}(s-s_1)^{m-\nu-1}\leftidx{_{\scs{(i)}}}u_{\scs N}^{(m)}(s_1)\mathrm{d}s_1\big)\\
&\qquad-\int_{(i-1)\tau}^{s}(s-s_1)^{m-\nu-1}\leftidx{_{\scs{(i)}}}u_{\scs N}^{(m)}(s_1)\mathrm{d}s_1,\nonumber
\end{align*}
and
\begin{align*}
\leftidx{_{\scs{(i)}}}e_{P^p_N}(s)&=P_{\scs N}\big( \int_{(i-1)\tau}^{s}(s-s_1)^{m_p-\nu_p -1}\leftidx{_{\scs{(i)}}}u_{\scs N}^{(m_p)}(s_1)\mathrm{d}s_1\big)\\
&\qquad-\int_{(i-1)\tau}^{s}(s-s_1)^{m_p-\nu_p -1}\leftidx{_{\scs{(i)}}}u_{\scs N}^{(m_p)}(s_1)\mathrm{d}s_1.
\end{align*}
After $(n-1)$-times integrating by parts of the fourth and sixth term of the left-hand side of~\eqref{si7} we get (see Apendix~\ref{Ap1})
\begin{align}
\frac{1}{\Gamma(m-\nu)}\int_{(i-1)\tau}^{t}\frac{(t-s)^{n-1}}{(n-1)!}&\int_{(i-1)\tau}^{s}(s-s_1)^{m-\nu-1}\leftidx{_{\scs{(i)}}}e_{\scs N}^{(m)}(s_1)\mathrm{d}s_1\mathrm{d}s\nonumber\\
=\frac{1}{\Gamma(m-\nu)\prod_{j=1}^{n-1}(m-\nu+j-1)}&\int_{(i-1)\tau}^{t}\int_{(i-1)\tau}^{s}(s-s_1)^{m+n-\nu-2}\leftidx{_{\scs{(i)}}}e_{\scs N}^{(m)}(s_1)\mathrm{d}s_1\mathrm{d}s\label{si3},
\end{align}
and
\begin{align}
\frac{1}{\Gamma(m_p-\nu_p)}\int_{(i-1)\tau}^{t}\frac{(t-s)^{n-1}}{(n-1)!}&\int_{(i-1)\tau}^{s}(s-s_1)^{m_p-\nu_p-1}\leftidx{_{\scs{(i)}}}e_{\scs N}^{(m_p)}(s_1)\mathrm{d}s_1\mathrm{d}s\nonumber\\
=\frac{1}{\Gamma(m_p-\nu_p)\prod_{j=1}^{n-1}(m_p-\nu_p+j-1)}&\int_{(i-1)\tau}^{t}\int_{(i-1)\tau}^{s}(s-s_1)^{m_p+n-\nu_p-2}\leftidx{_{\scs{(i)}}}e_{\scs N}^{(m_p)}(s_1)\mathrm{d}s_1\mathrm{d}s\label{si4}.
\end{align}
Now we consider the three different cases:
 \begin{description}
   \item [{\it(i)}] $m\geq 4$ and $ m_p\geq 4$,
   \item [{\it(ii)}] $m\geq 4$ and $ m_p\leq 4$,
   \item [{\it(iii)}] $m\leq 4$.
 \end{description}

 Case $(i)$: $(m-3)$-times integrating by parts of right-hand side of equations~\eqref{si3} and~\eqref{si4}, for $n+1>\nu$, gives
\begin{align}
\frac{1}{\Gamma(m-\nu)\prod_{j=1}^{n-1}(m-\nu+j-1)}\int_{(i-1)\tau}^{t}\int_{(i-1)\tau}^{s}(s-s_1)^{m+n-\nu-2}&\leftidx{_{\scs{(i)}}}e_{\scs N}^{(m)}(s_1)\mathrm{d}s_1\mathrm{d}s\nonumber\\
=\frac{\prod_{j=1}^{m-3}(m+n-\nu-1-j)}{\Gamma(m-\nu)\prod_{j=1}^{n-1}(m-\nu+j-1)}\int_{(i-1)\tau}^{t}\int_{(i-1)\tau}^{s}(s-s_1)^{n-\nu+1}&\leftidx{_{\scs{(i)}}}e_{\scs N}^{(3)}(s_1)\mathrm{d}s_1\mathrm{d}s,\label{si5}
\end{align}
and
\begin{align}
\frac{1}{\Gamma(m_p-\nu_p)\prod_{j=1}^{n-1}(m_p-\nu_p+j-1)}\int_{(i-1)\tau}^{t}\int_{(i-1)\tau}^{s}(s-s_1)^{m_p+n-\nu_p -2}\leftidx{_{\scs{(i)}}}e_{\scs N}^{(m_p)}(s_1)\mathrm{d}s_1\mathrm{d}s&\nonumber\\
=\frac{\prod_{j=1}^{m_p-3}(m_p+n-\nu_p-1-j)}{\Gamma(m_p-\nu_p)\prod_{j=1}^{n-1}(m_p-\nu_p+j-1)}\int_{(i-1)\tau}^{t}\int_{(i-1)\tau}^{s}(s-s_1)^{n-\nu_p+1}\leftidx{_{\scs{(i)}}}e_{\scs N}^{(3)}(s_1)\mathrm{d}s_1\mathrm{d}s.&\label{si6}
\end{align}
Substituting the right-hand side of~\eqref{si5} into the right-hand side of~\eqref{si3}, we obtain
\begin{align}
\frac{1}{\Gamma(m-\nu)}\int_{(i-1)\tau}^{t}\frac{(t-s)^{n-1}}{(n-1)!}\int_{(i-1)\tau}^{s}(s-s_1)^{m-\nu-1}\leftidx{_{\scs{(i)}}}e_{\scs N}^{(m)}(s_1)&\mathrm{d}s_1\mathrm{d}s   \nonumber\\
=\frac{\prod_{j=1}^{m-3}(m+n-\nu-1-j)}{\Gamma(m-\nu)\prod_{j=1}^{n-1}(m-\nu+j-1)}\int_{(i-1)\tau}^{t}\int_{(i-1)\tau}^{s}(s-s_1)^{n-\nu+1}&\leftidx{_{\scs{(i)}}}e_{\scs N}^{(3)}(s_1)\mathrm{d}s_1\mathrm{d}s,\label{a23}
\end{align}
and similarly, from equations~\eqref{si4} and~\eqref{si6} we have
\begin{align}
\frac{1}{\Gamma(m_p-\nu_p)}\int_{(i-1)\tau}^{t}\frac{(t-s)^{n-1}}{(n-1)!}\int_{(i-1)\tau}^{s}(s-s_1)^{m_p-\nu_p -1}\leftidx{_{\scs{(i)}}}e_{\scs N}^{(m_p)}(s_1)\mathrm{d}s_1\mathrm{d}s&\nonumber\\
=\frac{\prod_{j=1}^{m_p-3}(m_p+n-\nu_p-1-j)}{\Gamma(m_p-\nu_p)\prod_{j=1}^{n-1}(m_p-\nu_p+j-1)}\int_{(i-1)\tau}^{t}\int_{(i-1)\tau}^{s}(s-s_1)^{n-\nu_p+1}\leftidx{_{\scs{(i)}}}e_{\scs N}^{(3)}(s_1)\mathrm{d}s_1\mathrm{d}s.&\label{a244}
\end{align}
Substituting~\eqref{a23} and~\eqref{a244} into~\eqref{si7}, we arrive at
\begin{align}
|\leftidx{_{\scs{(i)}}}e_{\scs N}(t)|\leq & \sum_{j=0}^{n-1}\Big|\frac{A_j}{A_n(n-1-j)!}\Big|\int_{(i-1)\tau}^{t}\Big|(t-s)^{n-1-j}\leftidx{_{\scs{(i)}}}e_{\scs N}(s) \Big|\mathrm{d}s\nonumber\\
&+\Big| \frac{1}{A_n(n-1)!\Gamma(m-\nu)}\Big|  \int_{(i-1)\tau}^{t}\Big|(t-s)^{n-1}\leftidx{_{\scs{(i)}}}e_{\scs P_{\scs N}}(s)\Big|\mathrm{d}s  \nonumber\\
&+\Big|\frac{\prod_{j=1}^{m-3}(m+n-\nu-1-j)}{A_n\Gamma(m-\nu)\prod_{j=1}^{n-1}(m-\nu+j-1)}\Big|\int_{(i-1)\tau}^{t}\int_{(i-1)\tau}^{s}\Big|(s-s_1)^{n-\nu+1}\leftidx{_{\scs{(i)}}}e_{\scs N}^{(3)}(s_1)\Big|\mathrm{d}s_1\mathrm{d}s\nonumber\\
&+\sum_{p=1}^l\Big|\frac{\lambda_p}{A_n\Gamma(m_p-\nu_p)}\Big| \int_{(i-1)\tau}^{t}\Big| (t-s)^{n-1}\leftidx{_{\scs{(i)}}}e_{P^p_N}(s)\Big|\mathrm{d}s \nonumber\\
 &+\sum_{p=1}^l\Big|\frac{\prod_{j=1}^{m_p-3}(m_p+n-\nu_p-1-j)}{A_n\Gamma(m_p-\nu_p)\prod_{j=1}^{n-1}(m_p-\nu_p+j-1)}\Big|\int_{(i-1)\tau}^{s}\Big|(s-s_1)^{n-\nu_p+1}\leftidx{_{\scs{(i)}}}e_{\scs N}^{(3)}(s_1)\Big|\mathrm{d}s_1\mathrm{d}s\nonumber\\
 &+\int_{(i-1)\tau}^{t}\Big|\frac{(t-s)^{n-1}}{A_n(n-1)!}\leftidx{_{\scs{(i)}}}e_f(s)\Big|\mathrm{d}s. \label{bis07}
 \end{align}
 We may rewrite Eq.~\eqref{bis07} as
\begin{align}
|\leftidx{_{\scs{(i)}}}e_{\scs N}(t)|\leq  & \gamma_1\int_{(i-1)\tau}^{t}|\leftidx{_{\scs{(i)}}}e_{\scs N}(s)|\mathrm{d}s+\gamma_2\int_{(i-1)\tau}^{t}|\leftidx{_{\scs{(i)}}}e_{\scs P_{\scs N}}(s)|\mathrm{d}s+\gamma_3\int_{(i-1)\tau}^{t}\Big|s^{n-\nu+1}\leftidx{_{\scs{(i)}}}e_{\scs N}^{''}(s)\Big|\mathrm{d}s\nonumber\\
&+\gamma_4\sum_{p=1}^l\int_{(i-1)\tau}^{t}|\leftidx{_{\scs{(i)}}}e_{P^p_N}(s)|\mathrm{d}s+\gamma_5\sum_{p=1}^l\int_{(i-1)\tau}^{t}\Big|s^{n-\nu_p+1}\leftidx{_{\scs{(i)}}}e_{\scs N}^{''}(s)\Big|\mathrm{d}s\nonumber\\
&\qquad+\gamma_6\int_{(i-1)\tau}^{t}|\leftidx{_{\scs{(i)}}}e_f(s)|\mathrm{d}s,\label{bis7}
\end{align}
where $\gamma_i$ are independent of $N$ and only may depend on $n$, $m$, $m_p$ and $\nu$.

By the Gronwall lemma~\cite{9} we get
\begin{align}
|\leftidx{_{\scs{(i)}}}e_{\scs N}(t)|\leq& e^{(\int_{(i-1)\tau}^{t}(\gamma_1)\mathrm{d}s }  \Big( \gamma_2\int_{(i-1)\tau}^{t}|\leftidx{_{\scs{(i)}}}e_{\scs P_{\scs N}}(s)|\mathrm{d}s+\gamma_3\int_{(i-1)\tau}^{t}\Big|s^{n-\nu+1}\leftidx{_{\scs{(i)}}}e_{\scs N}^{''}(s)\Big|\mathrm{d}s\nonumber\\
&+\gamma_4\sum_{p=1}^l\int_{(i-1)\tau}^{t}|\leftidx{_{\scs{(i)}}}e_{P^p_N}(s)|\mathrm{d}s+\gamma_5\sum_{p=1}^l\int_{(i-1)\tau}^{t}\Big|s^{n-\nu_p+1}\leftidx{_{\scs{(i)}}}e_{\scs N}^{''}(s)\Big|\mathrm{d}s\nonumber\\
&+\gamma_6\int_{(i-1)\tau}^{t}|\leftidx{_{\scs{(i)}}}e_f(s)|\mathrm{d}s \Big)\nonumber\\
\leq &\gamma_7\int_{(i-1)\tau}^{t}|\leftidx{_{\scs{(i)}}}e_{\scs P_{\scs N}}(s)|\mathrm{d}s+\gamma_8\int_{(i-1)\tau}^{t}\Big|s^{n-\nu+1}\leftidx{_{\scs{(i)}}}e_{\scs N}^{''}(s)\Big|\mathrm{d}s\nonumber\\
&+\gamma_{9}\sum_{p=1}^l\int_{(i-1)\tau}^{t}|\leftidx{_{\scs{(i)}}}e_{P^p_N}(s)|\mathrm{d}s+\gamma_{10}\sum_{p=1}^l\int_{(i-1)\tau}^{t}\Big|s^{n-\nu_p+1}\leftidx{_{\scs{(i)}}}e_{\scs N}^{''}(s)\Big|\mathrm{d}s\nonumber\\
&\qquad+\gamma_{11}\int_{(i-1)\tau}^{t}|\leftidx{_{\scs{(i)}}}e_f(s)|\mathrm{d}s,\nonumber
\end{align}
where $\gamma_7,\cdots,\gamma_{11}$ are some constants which related to $\gamma_1,\cdots,\gamma_6$.

From Lipschitz conditions~\eqref{lip} and~\eqref{lip2}, inequality~\eqref{hef} and generalized Hardy's inequality~\cite{HH} we obtain
\begin{align}
||\leftidx{_{\scs{(i)}}}e_{\scs N}(t)||_{{\mathcal L^2_{\scs w}}(\Omega)}\leq&\gamma_{12} ||\leftidx{_{\scs{(i)}}}e_{\scs P_{\scs N}}(s)||_{{\mathcal L^2_{\scs w}}(\Omega)}+\gamma_{13} || s^{n-\nu+1}\leftidx{_{\scs{(i)}}}e_{\scs N}^{''}(s)||_{{\mathcal L^2_{\scs w}}(\Omega)}\nonumber\\
&+\gamma_{14}\sum_{p=1}^l||\leftidx{_{\scs{(i)}}}e_{P^p_N}(s)||_{{\mathcal L^2_{\scs w}}(\Omega)}\label{bis8}\nonumber\\
&+\gamma_{15}\sum_{p=1}^l|| s^{n-\nu_p +1}\leftidx{_{\scs{(i)}}}e_{\scs N}^{''}(s)||_{{\mathcal L^2_{\scs w}}(\Omega)}+C_{5}N^{-3/2},
\end{align}
where $\gamma_{12},\cdots,\gamma_{15}$ are some constants which related to $\gamma_7,\cdots,\gamma_{11}$ and are independent of $N$.
From \eqref{hef} we get
\begin{align}
||s^{n-\nu}\leftidx{_{\scs{(i)}}}e_{\scs N}^{''}(s)||_{{\mathcal L^2_{\scs w}}(\Omega)}&\leq ||s^{n-\nu}||_{{\mathcal L^2_{\scs w}}(\Omega)}||\leftidx{_{\scs{(i)}}}e_{\scs N}^{''}(s)||_{{\mathcal L^2_{\scs w}}(\Omega)}\nonumber\\
&\leq \gamma_{16} ||\leftidx{_{\scs{(i)}}}e_{\scs N}(s)||_{H^2_w(\Omega)}\nonumber\\
&\leq \gamma_{17}N^{7/2-k}|\leftidx{_{\scs{(i)}}}u|_{H^{k:N}_w(\Omega)},\label{bis9}
\end{align}
and
\begin{align}
||s^{n-\nu_p}\leftidx{_{\scs{(i)}}}e_{\scs N}^{''}(s)||_{{\mathcal L^2_{\scs w}}(\Omega)}&\leq ||s^{n-\nu_p}||_{{\mathcal L^2_{\scs w}}(\Omega)}||\leftidx{_{\scs{(i)}}}e_{\scs N}^{''}(s)||_{{\mathcal L^2_{\scs w}}(\Omega)}\nonumber\\
&\leq \gamma_{18} ||\leftidx{_{\scs{(i)}}}e_{\scs N}(s)||_{H^2_w(\Omega)}\nonumber\\
&\leq \gamma_{19}N^{7/2-k}|\leftidx{_{\scs{(i)}}}u|_{H^{k:N}_w(\Omega)},\label{bis10}
\end{align}
where $\gamma_{16},\cdots,\gamma_{19}$ do not depend on $N$.
From \eqref{hef} we have
\begin{align}
||\leftidx{_{\scs{(i)}}}e_{P_{\scs N}}(t)||_{{\mathcal L^2_{\scs w}}(\Omega)}&\leq C_6N^{-3/2}\Big|\int_{(i-1)\tau}^{t}(t-s)^{m-\nu-1}\leftidx{_{\scs{(i)}}}u_{\scs N}^{(m)}(s)\mathrm{d}s\Big|_{H^{1:N}_w(\Omega)}\nonumber\\
&=C_7N^{-3/2}|D^\nu \leftidx{_{\scs{(i)}}}u_{\scs N}|_{H^{1:N}_w(\Omega)}\label{mosh},\\
||\leftidx{_{\scs{(i)}}}e_{P^p_N}(t)||_{{\mathcal L^2_{\scs w}}(\Omega)}&\leq C_8N^{-3/2}\Big|\int_{(i-1)\tau}^{t}(t-s)^{m_p-\nu_p-1}\leftidx{_{\scs{(i)}}}u_{\scs N}^{(m_p)}(s)\mathrm{d}s\Big|_{H^{1:N}_w(\Omega)}\nonumber\\
&=C_9N^{-3/2}|D^{\nu_p} \leftidx{_{\scs{(i)}}}u_{\scs N}|_{H^{1:N}_w(\Omega)}\label{mosh1}.
\end{align}
Linear operators $D^\nu :\mathbb{P}_N\to \mathbb{P}_N^\nu$ and $D^{\nu_p} :\mathbb{P}_N\to \mathbb{P}_N^{\nu_p}$ are bounded (see Appendix~\ref{Ap3}), so that the constants $C_{10}$  and $C_{11}^p$ can be found such that
\begin{equation}
|D^\nu \leftidx{_{\scs{(i)}}}u_{\scs N}|_{H^{k:N}_w(\Omega)}\leq C_{10}| \leftidx{_{\scs{(i)}}}u_{\scs N}|_{H^{k:N}_w(\Omega)}\label{bou1},
 \end{equation}
 and
 \begin{equation}
|D^{\nu_p} \leftidx{_{\scs{(i)}}}u_{\scs N}|_{H^{k:N}_w(\Omega)}\leq C_{11}^p| \leftidx{_{\scs{(i)}}}u_{\scs N}|_{H^{k:N}_w(\Omega)}\label{bou2}.
 \end{equation}
  Therefore, from~\eqref{mosh} and~\eqref{bou1} we have
\begin{align}
||\leftidx{_{\scs{(i)}}}e_{\scs P_{\scs N}}(t)||_{{\mathcal L^2_{\scs w}}(\Omega)}&\leq C_{12}N^{-3/2}|\leftidx{_{\scs{(i)}}}u_{\scs N}(s)|_{H^{1:N}_w(\Omega)}\nonumber\\
&=C_{12}N^{-3/2}|\leftidx{_{\scs{(i)}}}u_{\scs N}(s)-\leftidx{_{\scs{(i)}}}u(s)+\leftidx{_{\scs{(i)}}}u(s)|_{H^{1:N}_w(\Omega)}\nonumber\\
&\leq C_{12}N^{-3/2}\big( |\leftidx{_{\scs{(i)}}}e_{\scs N}(s)|_{H^{1:N}_w(\Omega)}+|\leftidx{_{\scs{(i)}}}u(s)|_{H^{1:N}_w(\Omega)} \big).
\end{align}
Since $u_{\scs N}=P_{\scs N}u$, we can write
\begin{align}
|\leftidx{_{\scs{(i)}}}e_{\scs N}(s)|_{H^{1:N}_w(\Omega)}=&|\leftidx{_{\scs{(i)}}}u_{\scs N}(s)-\leftidx{_{\scs{(i)}}}u(s)+P_{\scs N}\leftidx{_{\scs{(i)}}}u(s)-P_{\scs N}\leftidx{_{\scs{(i)}}}u(s)|_{H^{1:N}_w(\Omega)}\nonumber\\
=&|\leftidx{_{\scs{(i)}}}u(s)-P_{\scs N}\leftidx{_{\scs{(i)}}}u(s)|_{H^{1:N}_w(\Omega)}.\nonumber
%&\leq C_{10}N^{-1}\big( C_{12}N^{3/2-k}|\leftidx{_{\scs{(i)}}}u|_{H^{k:N}_w(\Omega)}+|\leftidx{_{\scs{(i)}}}u|_{H^{1:N}_w(\Omega)} \big)\nonumber\\
\end{align}
So that
\begin{equation}\label{si}
||\leftidx{_{\scs{(i)}}}e_{\scs P_{\scs N}}(t)||_{{\mathcal L^2_{\scs w}}(\Omega)}{\overset{\scs{ \eqref{hef}}}{\leq}} C_{13}N^{-k} |\leftidx{_{\scs{(i)}}}u|_{H^{k:N}_w(\Omega)}+C_{12}N^{-3/2}|\leftidx{_{\scs{(i)}}}u|_{H^{1:N}_w(\Omega)}.
\end{equation}
From~\eqref{mosh1} and~\eqref{bou2} we have
\begin{align}
||\leftidx{_{\scs{(i)}}}e_{P^p_N}(t)||_{{\mathcal L^2_{\scs w}}(\Omega)}&\leq C_{14}^pN^{-3/2}|\leftidx{_{\scs{(i)}}}u_{\scs N}(s)|_{H^{1:N}_w(\Omega)}\nonumber\\
&\leq C_{14}^pN^{-3/2}\big( |\leftidx{_{\scs{(i)}}}e_{\scs N}(s)|_{H^{1:N}_w(\Omega)}+|\leftidx{_{\scs{(i)}}}u|_{H^{1:N}_w(\Omega)} \big)\nonumber\\
%&\leq C_{11}^pN^{-1}\big( C_{14}N^{3/2-k}|\leftidx{_{\scs{(i)}}}u|_{H^{k:N}_w(\Omega)}+|\leftidx{_{\scs{(i)}}}u|_{H^{1:N}_w(\Omega)} \big)\nonumber\\
&{\overset{\scs{ \eqref{hef}}}{\leq}} C_{15}^pN^{-k} |\leftidx{_{\scs{(i)}}}u|_{H^{k:N}_w(\Omega)}+C_{14}^pN^{-3/2}|\leftidx{_{\scs{(i)}}}u|_{H^{1:N}_w(\Omega)}.\label{si1}
\end{align}
From Eqs. \eqref{bis8}-\eqref{si1}, for $m\geq 4$ and $ m_p\geq 4$, we have
\begin{align*}
||\leftidx{_{\scs{(i)}}}e_{\scs N}(t)||_{{\mathcal L^2_{\scs w}}(\Omega)}\leq &
 C_1N^{-k}|\leftidx{_{\scs{(i)}}}u|_{H_{\scs w}^{k:N}(\Omega)}+C_2 N^{-3/2}|\leftidx{_{\scs{(i)}}}u|_{H_{\scs w}^{1:N}(\Omega)}\\
 &+C_3N^{2(\eta_1 -1)-1/2-k}|\leftidx{_{\scs{(i)}}}u|_{H_{\scs w}^{k:N}}\\
 &+C_{4}N^{2(\eta_2 -1)-1/2-k}|\leftidx{_{\scs{(i)}}}u|_{H_{\scs w}^{k:N}}+C_{5}N^{-3/2},   %+C_qN^{-1}|\leftidx{_{\scs{(i)}}}q|_{H^{1:N}},
\end{align*}
where $C_1,\cdots, C_5$ are constants which related to earlier $\gamma_i$'s and $C_i$'s.

The same argument can be done for the cases $(ii)$ and $(iii)$ by assuming that $n+1>\nu$, $m_p-\nu_p+n>2$, and $m-\nu+n>2$ respectively.
\end{proof}

\section{Numerical results}
In this section, we consider several practical examples which, in general, do not have an exact solution. Here, we made use of the $Matlab~2014a$ package. The computational codes were conducted on an Intel(R) Core(TM) i7-6700K processor, equipped with 8 GB of RAM. Also, We use the fix-point iteration method for solving nonlinear systems, and stopping criterion is set to be $10^{-15}$.

\textbf{Example 1.} Consider the following FDDE
\begin{equation}\label{Ex3}
D^{\nu}u(t)=h(t)-u(t)-u(t-\tau), \quad t\in (0,T],\\
\end{equation}
with the boundary condition
\[
u(t)=0, \quad t\in [-\tau, 0],
\]
where $\tau$ is taken as a fraction of the length of time interval $[0, 1]$. Now, two different cases for the forcing term $h(t)$ may be considered:\\
Case $(i)$:
\begin{equation}\label{ht1}
h(t)=%\left\{
\begin{cases}%{array}{lr}
\frac{\Gamma(11)}{\Gamma(11-\nu)}t^{10-\nu}+t^{10},& t\in [0, \tau],\\
\frac{\Gamma(11)}{\Gamma(11-\nu)}t^{10-\nu}+t^{10}+(t-\tau)^{10},& t\in (\tau,1],
\end{cases}%\right.
\end{equation}
which corresponding exact solution is
\begin{equation}\label{aval}
u(t)=
\begin{cases}
0, & t\in [-\tau, 0],\\
t^{10},& t\in (0,1].
\end{cases}
\end{equation}
Case $(ii)$:
 \begin{equation}\label{ht2}
h(t)=
\begin{cases}
\sum_{j=0}^{\infty} \gamma_j\frac{\Gamma(\beta_j)}{\Gamma(\beta_j-\nu)}t^{\xi_j-\nu}+t^{\frac{13}{2}}\sin(\pi t^{\frac{4}{3}}), & t\in [0, \tau],\\
\sum_{j=0}^{\infty} \gamma_j\frac{\Gamma(\beta_j)}{\Gamma(\beta_j-\nu)}t^{\xi_j-\nu}+t^{\frac{13}{2}}\sin(\pi t^{\frac{4}{3}})
+(t-\tau)^{\frac{13}{2}}\sin(\pi (t-\tau)^{\frac{4}{3}}),& t\in (\tau,1],
\end{cases}
\end{equation}
where
\begin{align*}
  \gamma_j= & \frac{(-1)^j}{(2j+1)!}\pi^{2j+1}, \\
  \beta_j= & \frac{53+16j}{6}, \\
  \xi_j= & \frac{47+16j}{6}.
\end{align*}
The related exact solution is
\begin{equation}\label{dovom}
u(t)=\left\{
\begin{array}{lr}
0, \hspace{2.25cm} t\in [-\tau, 0],\\
t^{\frac{13}{2}}\sin(\pi t^{\frac{4}{3}}),\quad t\in (0,1].
\end{array}\right.
\end{equation}

Zayernouri et~al.~\cite{179} used Petrov-Galerkin spectral method to solve \eqref{Ex3}. They employed Reimann-Liouville fractional derivatives while we use the Caputo's fractional derivatives. As we know, these are related together by the following relation~\cite{Monje}
\[
\leftidx{_{\scs{R}}}D^{\nu}u(t)=D^{\nu}u(t)+\sum_{k=0}^{m-1}\frac{t^{k-\nu}}{\Gamma(k+1-\nu)}u^{(k)}(0^+),
\]
where $\leftidx{_{\scs{R}}}D^{\nu}$ stands for Reimann-Liouville fractional derivative. Since $u(0)=0$, both fractional derivatives are the same and consequently the resulted approximate solution are comparable.

The $\mathcal{L}^2$-Error of the Case $(i)$ and Case $(ii)$ for $\tau=0.5$ and different values of $N$ and $\nu=0.1$ are reported in Table~\ref{table:jad5} and Table~\ref{table:jad6}, respectively. The time responses of the two cases are plotted in Fig.~\ref{Fig5} and Fig.~\ref{Fig6}. As one can observe, the results are in remarkable agreement with the results of Zayernouri et~al.~\cite{179}. The results are obtained by employing shifted Chebyshev basis are more accurate than the others.
\begin{table}[htp]
\tabcolsep4.5pt
\centering
\caption{$\mathcal{L}^2$-Error of Example 1 (Case $(i)$) for different values of $N$ with $\tau=0.5$ and $\nu=0.1$.}
\begin{tabular}{ccc}
  \hline
  $N$&Shifted Legendre Basis&Shifted Chebyshev Basis\\
  \hline\hline
3&$0.871781$&$0.022220$\\
  5&$0.268627$&$0.002870$\\
  7&$0.020670$&$1.262008e{-4}$\\
  9&$1.863461e{-4}$&$7.422971e{-7}$\\
  11&$2.232967e{-8}$&$9.407349e{-9} $\\
    13&$1.954103e{-8}$&$1.039491e{-12}$\\
  15&$1.020807e{-8}$&$9.904553e{-14}$\\
  17&$6.124110e{-10}$&$9.425039e{-15}$\\
  19&$2.681213e{-12}$&$1.025904e{-15}$\\
  \hline
\end{tabular}
\label{table:jad5}
\end{table}
\begin{table}[hbp]
\tabcolsep4.5pt
\centering
\caption{$\mathcal{L}^2$-Error of Example 1 (Case $(ii)$) for different values of $N$ with $\tau=0.5$ and $\nu=0.1$.}
\begin{tabular}{ccc}
  \hline
  $N$&Shifted Legendre Basis&Shifted Chebyshev Basis\\
  \hline\hline
3&$0.343192$&$0.016382$\\
  5&$0.299374$&$0.002711$\\
  7&$0.006547$&$1.763871e{-4}$\\
  9&$0.003438$&$1.300984e{-5}$\\
  11&$1.724315e{-4}$&$7.690661e{-7}$\\
    13&$1.106626e{-6}$&$8.797766e{-9}$\\
  15&$4.971592e{-7}$&$2.055257e{-10}$\\
  17&$1.892239e{-8}$&$5.953249e{-12}$\\
  19&$ 9.786602e-{11}$&$1.270578e{-13}$\\
  \hline
\end{tabular}
\label{table:jad6}
\end{table}

\begin{figure}[htp]
\vspace{7.25cm}
\begin{picture}(0.,0.)
\put(-155.,-75.){\includegraphics{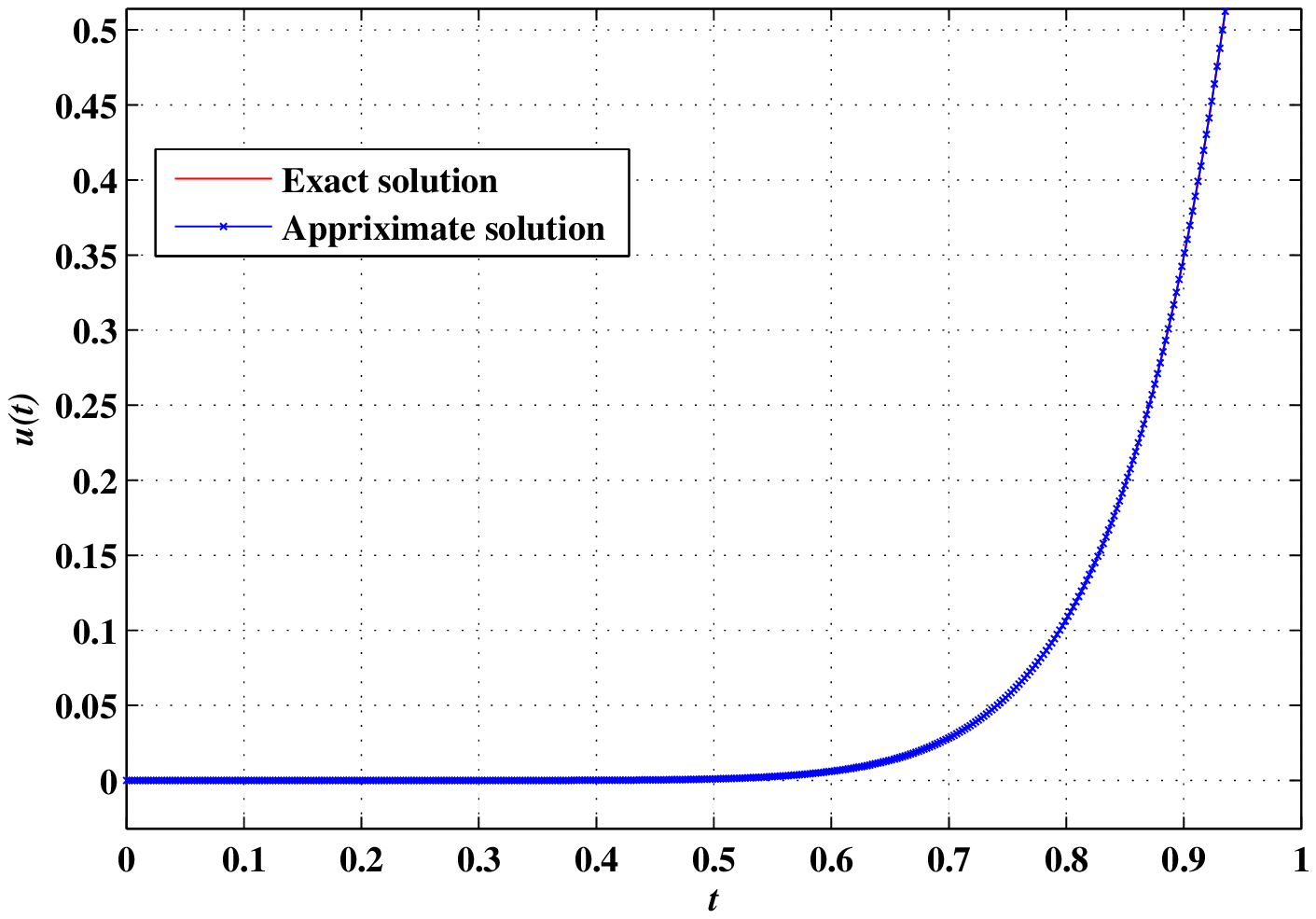}}
\end{picture}
\vspace{-2cm}
\caption{Time response of Example~1 (Case $(i)$) by the shifted Chebyshev basis with $N=15$ and $\tau=0.5$.}
 \label{Fig5}
\end{figure}
\begin{figure}[htp]
\vspace{7.25cm}
\begin{picture}(0.,0.)
\put(-155.,-75.){\includegraphics{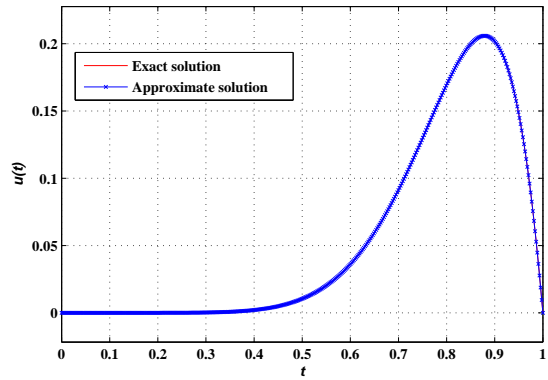}}
\end{picture}
\vspace{-2cm}
\caption{Time response of Example~1 (Case $(ii)$) by the shifted Chebyshev basis with $N=15$ and $\tau=0.5$.}
 \label{Fig6}
\end{figure}

\textbf{Example 2.} Consider the following~FDDE~\cite{179}
\begin{equation}\label{Ex4}
D^{\nu}u(t)=h(t)-A(t)u(t)-B(t)u(t-\tau), \quad t\in (0,1],\\
\end{equation}
with the initial condition
\[
u(t)=0, \quad t\in [-\tau, 0].
\]
Now, two cases are taken into consideration. Case $(i)$: Take $A(t)=B(t)=t^2-t^3$. The corresponding exact solution is given in~\eqref{aval}. Case $(ii)$: Put $A(t)=B(t)=\sin(\pi t)$, where the analytical solution is given in~\eqref{dovom}. The $\mathcal{L}^2$-Error of the Case $(i)$ and Case $(ii)$ for different values of $N$, $\tau=0.5$ and $\nu=0.1$ are provided in Table~\ref{table:jad7} and Table~\ref{table:jad8}, respectively. The $\mathcal{L}^2$-Error of the Case $(i)$ in the current work is at least of order $10^{-13}$ for $N\geq 11$, while it happened only when $N\geq 17$ in Ref.~\cite{179}. In the Case $(ii)$, the results are in remarkable agreement with the results of Zayernouri et~al.~\cite{179}. Again, the results are obtained by employing shifted Chebyshev basis are more accurate than the others.
%is at most of order $10^{-10}$, while in the current work is at least of order $10^{-14}$. Again, the results are obtained by employing shifted Chebyshev basis are more accurate than the others.
\begin{table}[!hbp]
\tabcolsep4.5pt
\centering
\caption{$\mathcal{L}^2$-Error of Example 2 (Case $(i)$) for different values of $N$ with $\tau=0.5$ and $\nu=0.1$.}
\begin{tabular}{ccc}
  \hline
  $N$&Shifted Legendre Basis&Shifted Chebyshev Basis\\
  \hline\hline
3&$0.708611$&$0.655433$\\
  5&$0.113928$&$0.082273$\\
  7&$0.005367$&$0.003304$\\
  9&$3.324243e{-5}$&$1.697280e{-5}$\\
  11&$6.758311e{-13}$&$3.050396e{-14}$\\
    13&$5.424470e{-13}$&$3.024794e{-14}$\\
  15&$3.985062e{-13}$&$3.019384e{-14}$\\
  17&$2.606128e{-13}$&$3.009684e{-14}$\\
  19&$1.112418e{-13}$&$2.018803e{-14}$\\
  \hline
\end{tabular}
\label{table:jad7}
\end{table}

\begin{table}[!htp]
\tabcolsep4.5pt
\centering
\caption{$\mathcal{L}^2$-Error of Example 2 (Case $(ii)$) for different values of $N$ with $\tau=0.5$ and $\nu=0.1$.}
\begin{tabular}{ccc}
  \hline
  $N$&Shifted Legendre Basis&Shifted Chebyshev Basis\\
  \hline\hline
3&$0.447450$&$0.025522$\\
  5&$0.114932$&$0.003248$\\
  7&$0.005888$&$2.277593e-{4}$\\
  9&$6.020148e-{4}$&$1.489728e{-5}$\\
  11&$3.318842e{-5}$&$9.040363e{-7}$\\
    13&$3.284365e{-7}$&$1.067213e{-8}$\\
  15&$9.812455e{-9}$&$2.358068e{-10}$\\
  17&$3.035429e{-10}$&$6.944012e{-12}$\\
  19&$2.880012e{-11}$&$1.692653e{-13}$\\
  \hline
\end{tabular}
\label{table:jad8}
\end{table}

\textbf{Example 3.} Consider Houseflies model as following~\cite{6}
\begin{equation}\label{Ex1}
D^{\nu}u(t)=-du(t)+cu(t-\tau)(k-czu(t-\tau)),\quad t>0,\\
\end{equation}
with the initial condition
\[
u(t)=160,\quad -\tau\leq t\leq 0.
\]
By taking $c= 1.81$, $k=0.5107$, $d=0.147$ and $z=0.000226$, numerical results of the shifted Chebyshev basis for different values of $\nu$, $\tau=3$ and $\tau=5$ are presented in Table~\ref{table:jad1} and Table~\ref{table:jad2}, respectively. Table~\ref{table:jad1L} and Table~\ref{table:jad2L} describe the numerical results of the shifted Legendre basis with the same parameters. The approximate solutions are sketched in Fig.~\ref{Fig1} and Fig.~\ref{Fig2}.  Comparison between the second and third columns of the Table~\ref{table:jad1} and Table~\ref{table:jad2} (Table~\ref{table:jad1L} and Table~\ref{table:jad2L}) reveals that the maximum absolute error (MAE) is $2\times 10^{-6}$ while the MAE reported in Ref.~\cite{6} which employed the finite difference method was of order $10^{-5}$ using $N=100$. Additionally, phase-space solution for $\nu=0.5$ and $\tau=5$ is plotted in Fig.~\ref{Fig00}. Moreover, log plot of MAE for different values of $N$, $\tau=3$ and $\tau=5$ are plotted in Fig.~\ref{NF1} and Fig.~\ref{NF2}, respectively.
\begin{table}[!htp]
\tabcolsep4.5pt
\centering
\caption{Numerical results of Example 3 by the shifted Chebyshev basis with $N=15$ and $\tau=3$.}
\begin{tabular}{cccccc}
  \hline
  $t$&exact $\nu=1$&$\nu=1$&$\nu=0.9$&$\nu=0.75$&$\nu=0.5$\\
  \hline\hline
  0.0&$160.000000$&$160.000000$&$160.000000$&$160.000000$&$160.000000$\\
  0.75&$234.865602$&$234.865602$&$239.333361$&$245.219089$&$252.133912$\\
    2.25&$361.967021$&$361.967021$&$352.369942$&$336.687494$&$308.795525$\\
    3.75&$481.825305$&$481.825304$&$472.760850$&$459.813354$&$439.073221$\\
    5.25&$670.725206$&$670.725204$&$650.373599$&$614.508045$&$543.799282$\\
    6.0&$776.578086$&$776.578085$&$742.142667$&$685.419046$&$583.383909$\\
  \hline
\end{tabular}
\label{table:jad1}
\end{table}

\begin{table}[!htp]
\tabcolsep4.5pt
\centering
\caption{Numerical results of Example 3 by the shifted Chebyshev basis with $N=15$ and $\tau=5$.}
\begin{tabular}{cccccc}
  \hline
  $t$&exact $\nu=1$&$\nu=1$&$\nu=0.9$&$\nu=0.75$&$\nu=0.5$\\
  \hline\hline
  0.0&$160.000000$&$160.000000$&$160.000000$&$160.000000$&$160.000000$\\
  1.25&$280.382021$&$280.382021$&$281.116516$&$280.559015$&$275.428893$\\
    3.75&$463.917311$&$463.917311$&$437.587058$&$399.856049$&$342.930615$\\
    6.25&$636.682069$&$636.682068$&$609.967593$&$571.739341$&$510.393022$\\
    8.75&$950.311527$&$950.311525$&$890.874639$&$798.685328$&$645.755656$\\
    10.0&$1107.006007$&$1107.006006$&$1022.495184$&$895.594663$&$695.358908$\\
  \hline
\end{tabular}
\label{table:jad2}
\end{table}
\begin{table}[!htp]
\tabcolsep4.5pt
\centering
\caption{Numerical results of Example 3 by the shifted Legendre basis with $N=15$ and $\tau=3$.}
\begin{tabular}{cccccc}
  \hline
  $t$&exact $\nu=1$&$\nu=1$&$\nu=0.9$&$\nu=0.75$&$\nu=0.5$\\
  \hline\hline
  0.0&$160.000000$&$160.000000$&$160.000000$&$160.000000$&$160.000000$\\
  0.75&$234.865602$&$234.865601$&$239.333372$&$245.219085$&$252.133922$\\
    2.25&$361.967021$&$361.967023$&$352.369941$&$336.687497$&$308.795514$\\
    3.75&$481.825305$&$481.825304$&$472.760849$&$459.813353$&$439.073219$\\
    5.25&$670.725206$&$670.725202$&$650.373596$&$614.508046$&$543.799279$\\
    6.0&$776.578086$&$776.578087$&$742.142661$&$685.419042$&$583.383904$\\
  \hline
\end{tabular}
\label{table:jad1L}
\end{table}
\begin{table}[!htp]
\tabcolsep4.5pt
\centering
\caption{Numerical results of Example 3 by the shifted Legendre basis with $N=15$ and $\tau=5$.}
\begin{tabular}{cccccc}
  \hline
  $t$&exact $\nu=1$&$\nu=1$&$\nu=0.9$&$\nu=0.75$&$\nu=0.5$\\
  \hline\hline
  0.0&$160.000000$&$160.000000$&$160.000000$&$160.000000$&$160.000000$\\
  1.25&$280.382021$&$280.382023$&$281.116518$&$280.559021$&$275.428895$\\
    3.75&$463.917311$&$463.917314$&$437.587055$&$399.856060$&$342.930617$\\
    6.25&$636.682069$&$636.682069$&$609.967581$&$571.739351$&$510.393025$\\
    8.75&$950.311527$&$950.311526$&$890.874641$&$798.685332$&$645.755659$\\
    10.0&$1107.006007$&$1107.006008$&$1022.495191$&$895.594671$&$695.358910$\\
  \hline
\end{tabular}
\label{table:jad2L}
\end{table}
\begin{figure}[!htp]
\vspace{7.cm}
\begin{picture}(0.,0.)
\put(-155.,-75.){\includegraphics{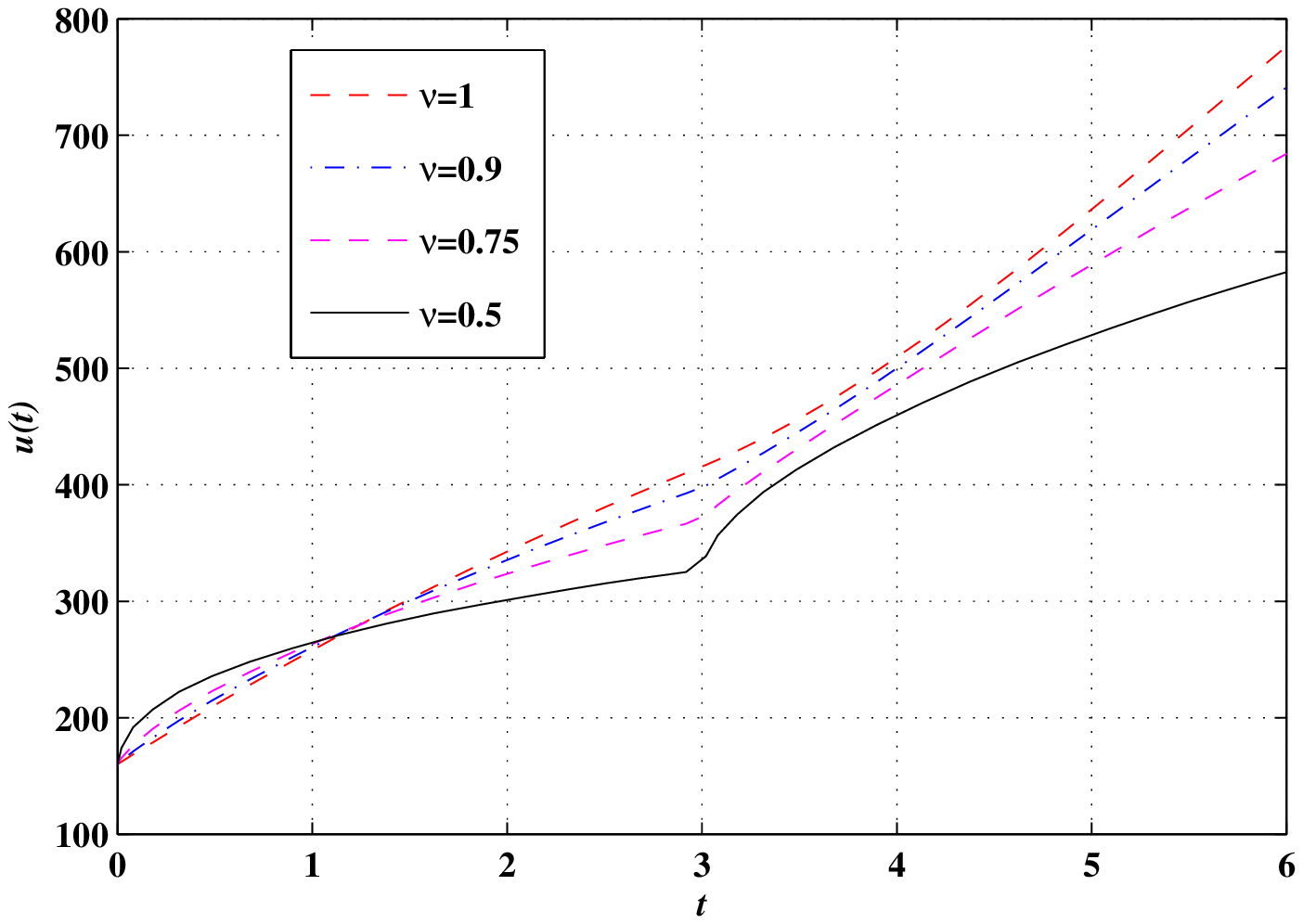}}
\end{picture}
\vspace{-2cm}
\caption{Approximate solution of Example~3 using the shifted Chebyshev basis with $N=15$ and $\tau=3$.}
 \label{Fig1}
\end{figure}
\begin{figure}[hbp]
\vspace{7.25cm}
\begin{picture}(0.,0.)
\put(-155.,-75.){\includegraphics{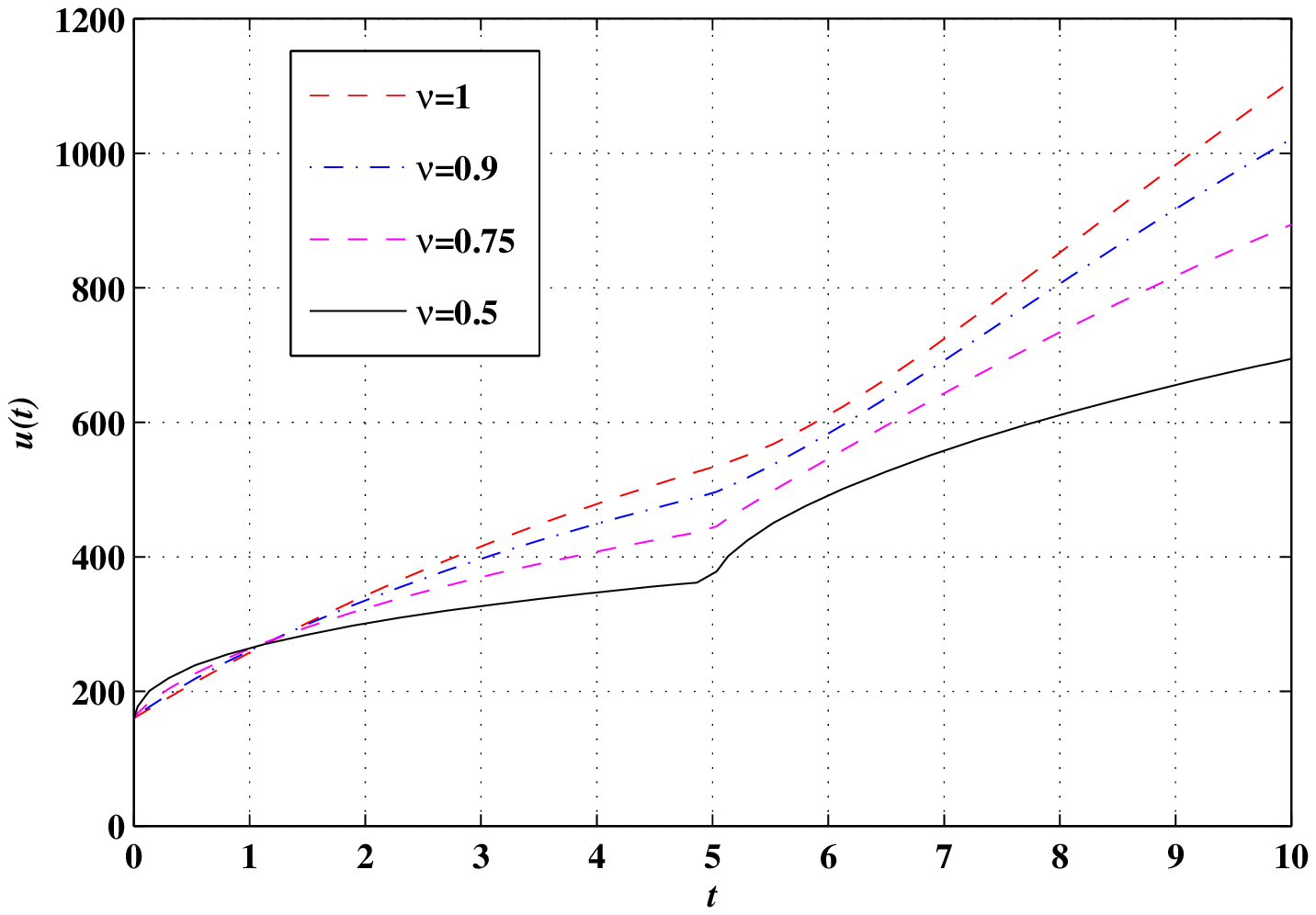}}
\end{picture}
\vspace{-2cm}
\caption{Approximate solution of Example~3 using the shifted Chebyshev basis with $N=15$ and $\tau=5$.}
 \label{Fig2}
\end{figure}
\begin{figure}[htp]
\vspace{7.25cm}
\begin{picture}(0.,0.)
\put(-155.,-75.){\includegraphics{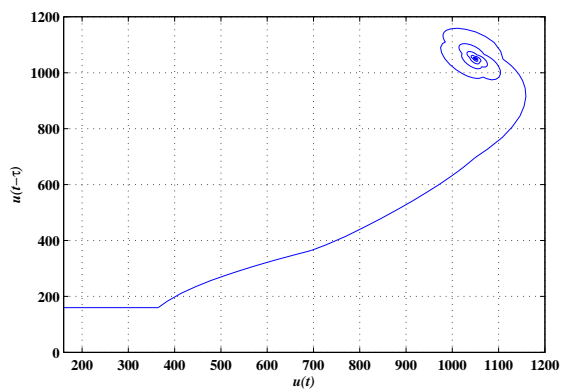}}
\end{picture}
\vspace{-2cm}
\caption{Phase-space solution of Example~3 using the shifted Chebyshev basis with $N=15$, $\nu=0.5$ and $\tau=5$.}
 \label{Fig00}
\end{figure}

\begin{figure}[htp]
\vspace{7.25cm}
\begin{picture}(0.,0.)
\put(-155.,-75.){\includegraphics{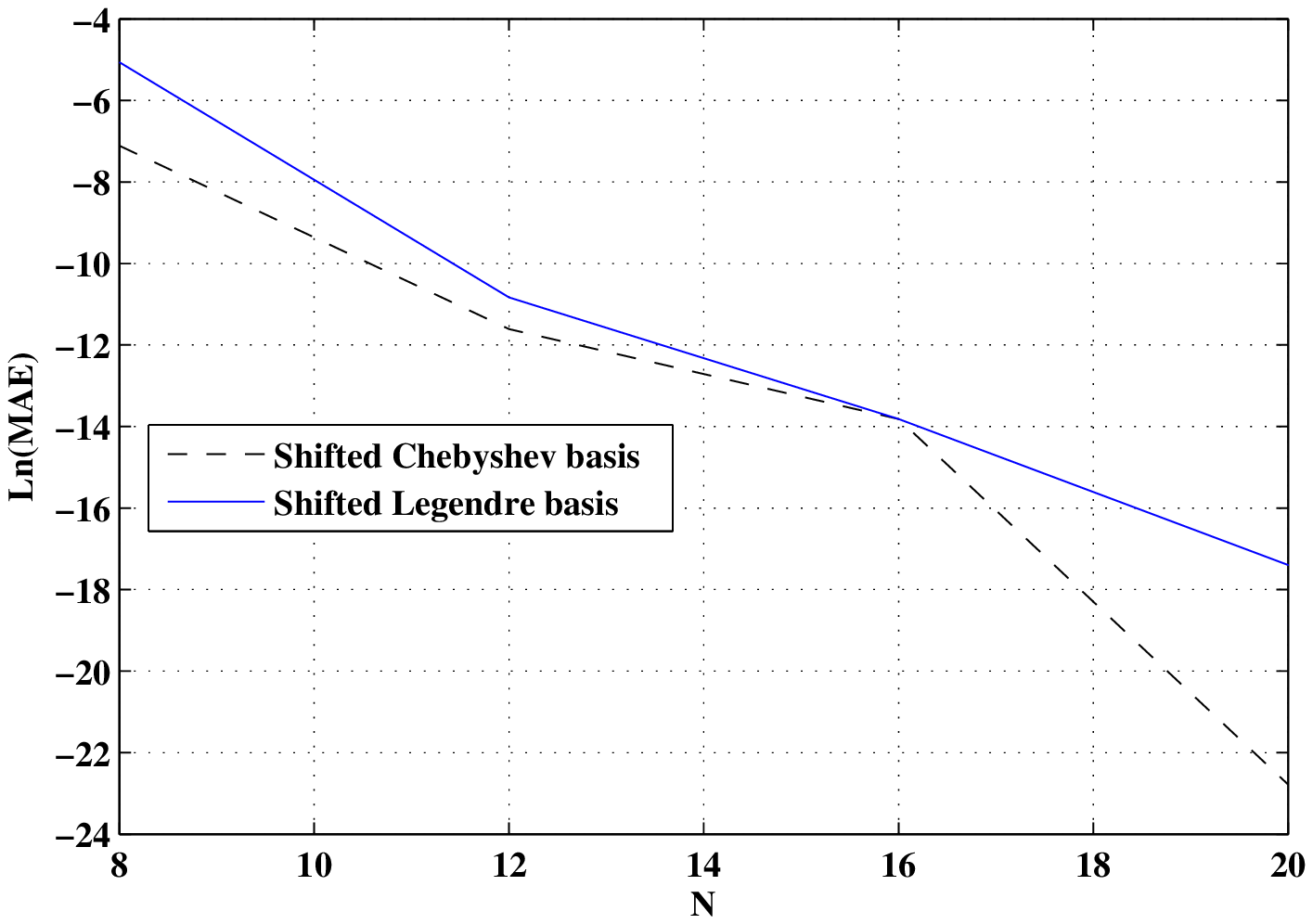}}
\end{picture}
\vspace{-2cm}
\caption{Log plot of MAE of Example~3 with $\nu=1$ and $\tau=3$.}
 \label{NF1}
\end{figure}

\begin{figure}[htp]
\vspace{7.25cm}
\begin{picture}(0.,0.)
\put(-155.,-75.){\includegraphics{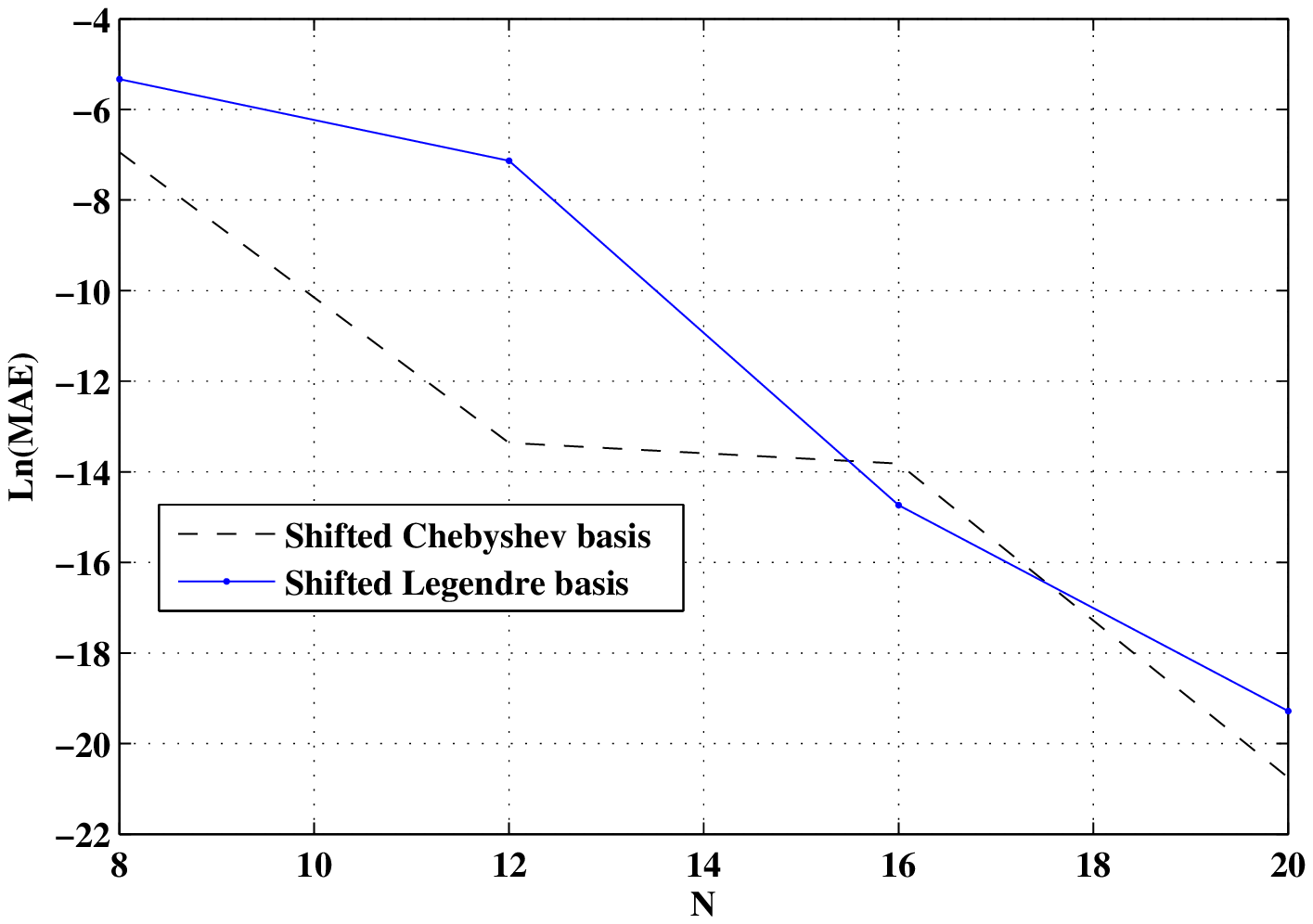}}
\end{picture}
\vspace{-2cm}
\caption{Log plot of MAE of Example~3 with $\nu=1$ and $\tau=5$.}
 \label{NF2}
\end{figure}

\textbf{Example 4.} The following model example concerns with the effect of noise on light which is introduced by Pieroux~\cite{6}
\begin{equation}\label{Ex2}
D^{\nu}u(t)=-\frac{1}{\epsilon}u(t)+\frac{1}{\epsilon}u(t)u(t-\tau), \quad t>0,\\
\end{equation}
with the initial condition
\[
u(t)=0.9,\quad -\tau\leq t\leq 0.
\]
The obtained results of the shifted Chebyshev basis for various values of $\nu$ and $\tau$ with $\epsilon=0.1$ are reported in Table~\ref{table:jad3} and Table~\ref{table:jad4}. Also, numerical results of the shifted Legendre basis are given in Table~\ref{table:jad3L} and Table~\ref{table:jad4L}. In this model example, the MAEs related to the current works are order of $10^{-6}$ using $N=15$, while the MAE reported in Ref.~\cite{6} which employed the finite difference method achieved to this order of accuracy using $N=100$ nodes. Fig.~\ref{Fig3} and Fig.~\ref{Fig4} show the approximate solutions, whereas Fig.~\ref{Fig01} demonstrates the phase-space solution for $\nu=0.5$ and $\tau=5$. Also, log plot of MAE for $\tau=1$ and $\tau=3$ are plotted in Fig.~\ref{NF3} and Fig.~\ref{NF4}, respectively.
\begin{table}[!htp]
\tabcolsep4.5pt
\centering
\caption{Numerical results of Example 4 by the shifted Chebyshev basis with $N=15$ and $\tau=1$.}
\begin{tabular}{cccccc}
  \hline
  $t$&exact $\nu=1$&$\nu=1$&$\nu=0.9$&$\nu=0.75$&$\nu=0.5$\\
  \hline\hline
  0.0&$0.900000$&$0.900000$&$0.900000$&$0.900000$&$0.900000$\\
  0.25&$0.700921$&$0.700921$&$0.670849$&$0.625076$&$0.555064$\\
    0.75&$0.425130$&$0.425130$&$0.417343$&$0.413280$&$0.419605$\\
    1.25&$0.198977$&$0.198979$&$0.172599$&$0.135910$&$0.097493$\\
    1.75&$0.021138$&$0.021138$&$0.024889$&$0.032106$&$0.044666$\\
    2.0&$0.004444$&$0.004444$&$0.011581$&$0.021070$&$0.036340$\\
  \hline
\end{tabular}
\label{table:jad3}
\end{table}
\begin{table}[!hbp]
\tabcolsep4.5pt
\centering
\caption{Numerical results of Example 4 by the shifted Chebyshev basis with $N=15$ and $\tau=3$.}
\begin{tabular}{cccccc}
  \hline
  $t$&exact $\nu=1$&$\nu=1$&$\nu=0.9$&$\nu=0.75$&$\nu=0.5$\\
  \hline\hline
  0.0&$0.900000$&$0.900000$&$0.900000$&$0.900000$&$0.900000$\\
  0.75&$0.425130$&$0.425130$&$0.417917$&$0.414396$&$0.421697$\\
    2.25&$0.094859$&$0.094859$&$0.137716$&$0.197791$&$0.288194$\\
    3.75&$0.002861$&$0.002859$&$0.006645$&$0.014154$&$0.030316$\\
    5.25&$0.000000$&$0.000000$&$0.000637$&$0.003305$&$0.013782$\\
    6.0&$0.000000$&$0.000000$&$0.000429$&$0.002413$&$0.011416$\\
  \hline
\end{tabular}
\label{table:jad4}
\end{table}
\begin{table}[!htp]
\tabcolsep4.5pt
\centering
\caption{Numerical results of Example 4 by the shifted Legendre basis with $N=15$ and $\tau=1$.}
\begin{tabular}{cccccc}
  \hline
  $t$&exact $\nu=1$&$\nu=1$&$\nu=0.9$&$\nu=0.75$&$\nu=0.5$\\
  \hline\hline
  0.0&$0.900000$&$0.900000$&$0.900000$&$0.900000$&$0.900000$\\
  0.25&$0.700921$&$0.700923$&$0.670842$&$0.625061$&$0.555051$\\
    0.75&$0.425130$&$0.425132$&$0.417353$&$0.413275$&$0.419613$\\
    1.25&$0.198977$&$0.198976$&$0.172538$&$0.135923$&$0.097481$\\
    1.75&$0.021138$&$0.021134$&$0.024892$&$0.032112$&$0.044652$\\
    2.0&$0.004444$&$0.004446$&$0.011572$&$0.021083$&$0.036361$\\
  \hline
\end{tabular}
\label{table:jad3L}
\end{table}
\begin{table}[!htp]
\tabcolsep4.5pt
\centering
\caption{Numerical results of Example 4 by the shifted Legendre basis with $N=15$ and $\tau=3$.}
\begin{tabular}{cccccc}
  \hline
  $t$&exact $\nu=1$&$\nu=1$&$\nu=0.9$&$\nu=0.75$&$\nu=0.5$\\
  \hline\hline
  0.0&$0.900000$&$0.900000$&$0.900000$&$0.900000$&$0.900000$\\
  0.75&$0.425130$&$0.425130$&$0.417921$&$0.414410$&$0.421730$\\
    2.25&$0.094859$&$0.094855$&$0.137725$&$0.197820$&$0.288230$\\
    3.75&$0.002861$&$0.002854$&$0.006653$&$0.014168$&$0.030334$\\
    5.25&$0.000000$&$0.000001$&$0.000648$&$0.003319$&$0.013797$\\
    6.0&$0.000000$&$0.000003$&$0.000437$&$0.002443$&$0.011443$\\
  \hline
\end{tabular}
\label{table:jad4L}
\end{table}
\begin{figure}[htp]
\vspace{7.25cm}
\begin{picture}(0.,0.)
\put(-155.,-75.){\includegraphics{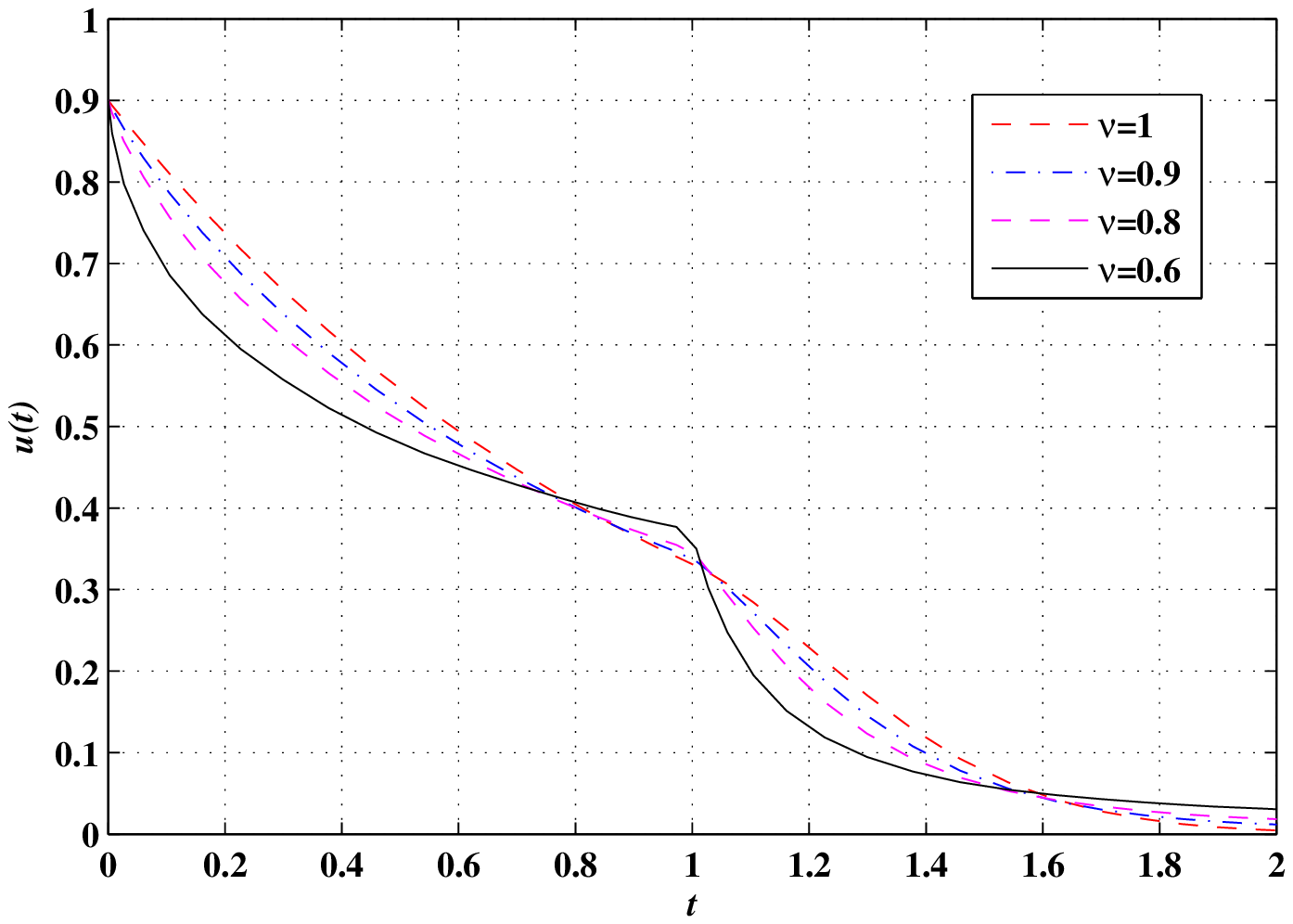}}
\end{picture}
\vspace{-2cm}
\caption{Approximate solution of Example~4 by the shifted Legendre basis with $N=15$ and $\tau=1$.}
 \label{Fig3}
\end{figure}
\begin{figure}[!htp]
\vspace{8.25cm}
\begin{picture}(0.,0.)
\put(-155.,-75.){\includegraphics{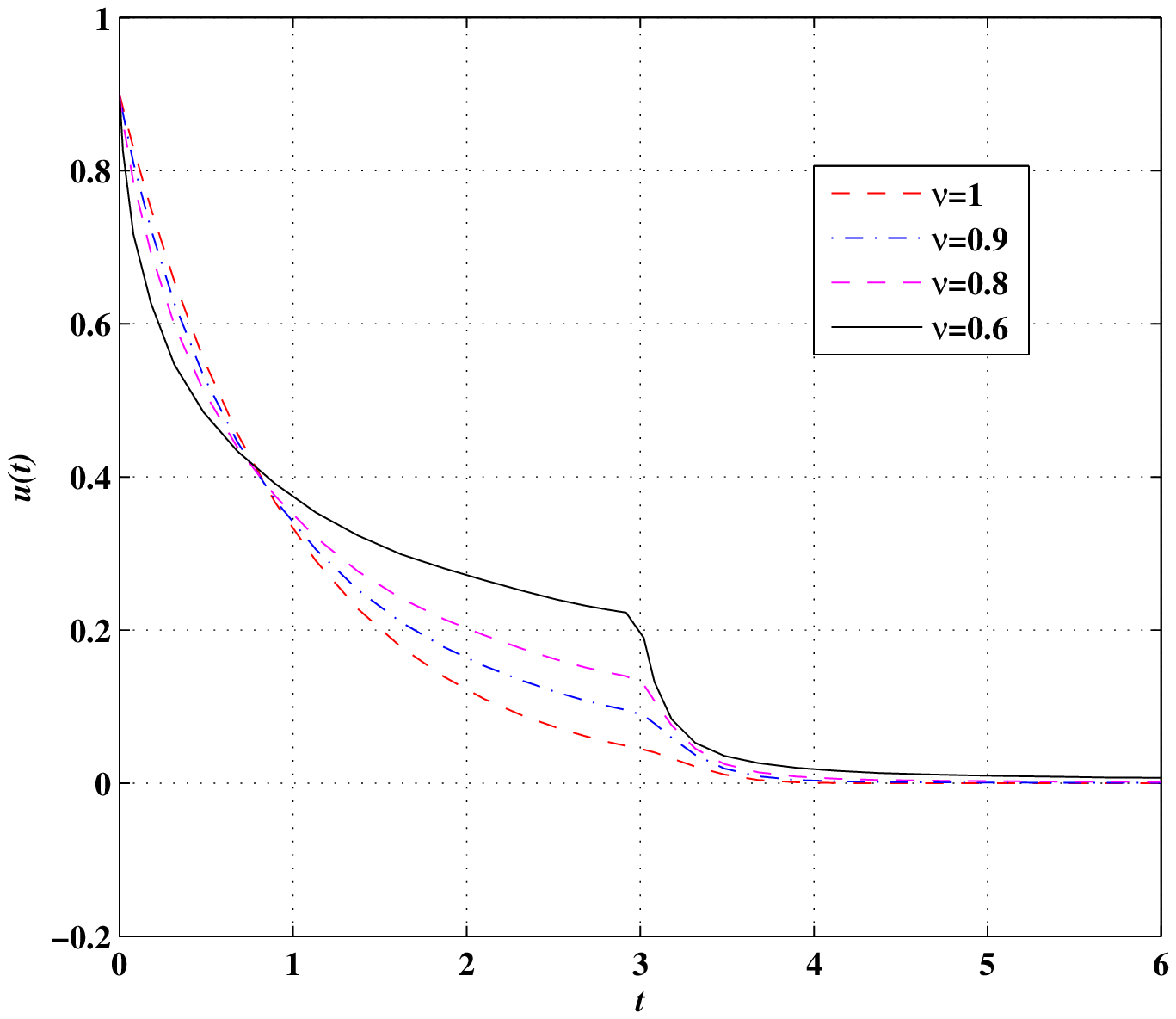}}
\end{picture}
\vspace{-1.5cm}
\caption{Approximate solution of Example~4 by the shifted Legendre basis with $N=15$ and $\tau=3$.}
 \label{Fig4}
\end{figure}
\begin{figure}[!htp]
\vspace{7.25cm}
\begin{picture}(0.,0.)
\put(-155.,-75.){\includegraphics{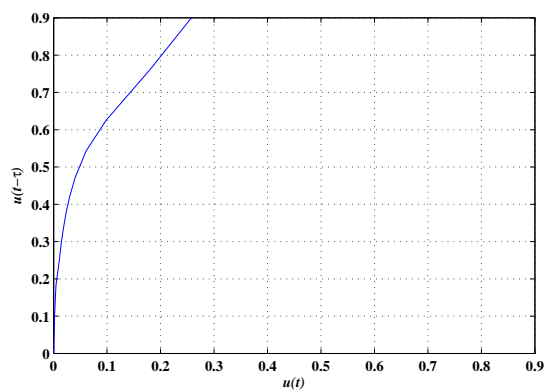}}
\end{picture}
\vspace{-2.0cm}
\caption{Phase-space solution of Example~4 by the shifted Legendre basis with $N=15$, $\nu=0.5$ and $\tau=3$.}
 \label{Fig01}
\end{figure}
\begin{figure}[htp]
\vspace{7.25cm}
\begin{picture}(0.,0.)
\put(-155.,-75.){\includegraphics{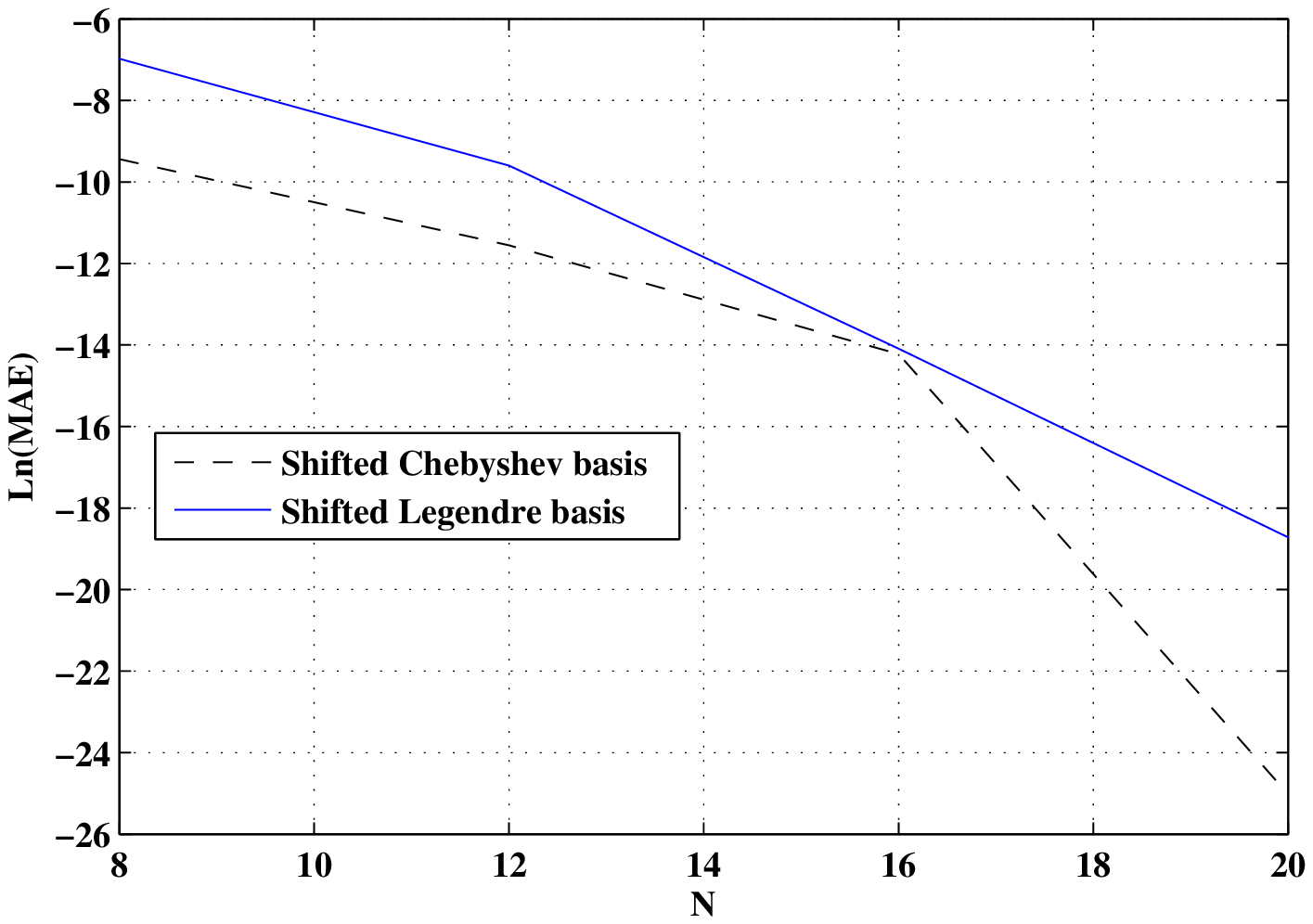}}
\end{picture}
\vspace{-2cm}
\caption{Log plot of MAE of Example~4 with $\nu=1$ and $\tau=1$.}
 \label{NF3}
 \end{figure}

 \begin{figure}[htp]
\vspace{7.25cm}
\begin{picture}(0.,0.)
\put(-155.,-75.){\includegraphics{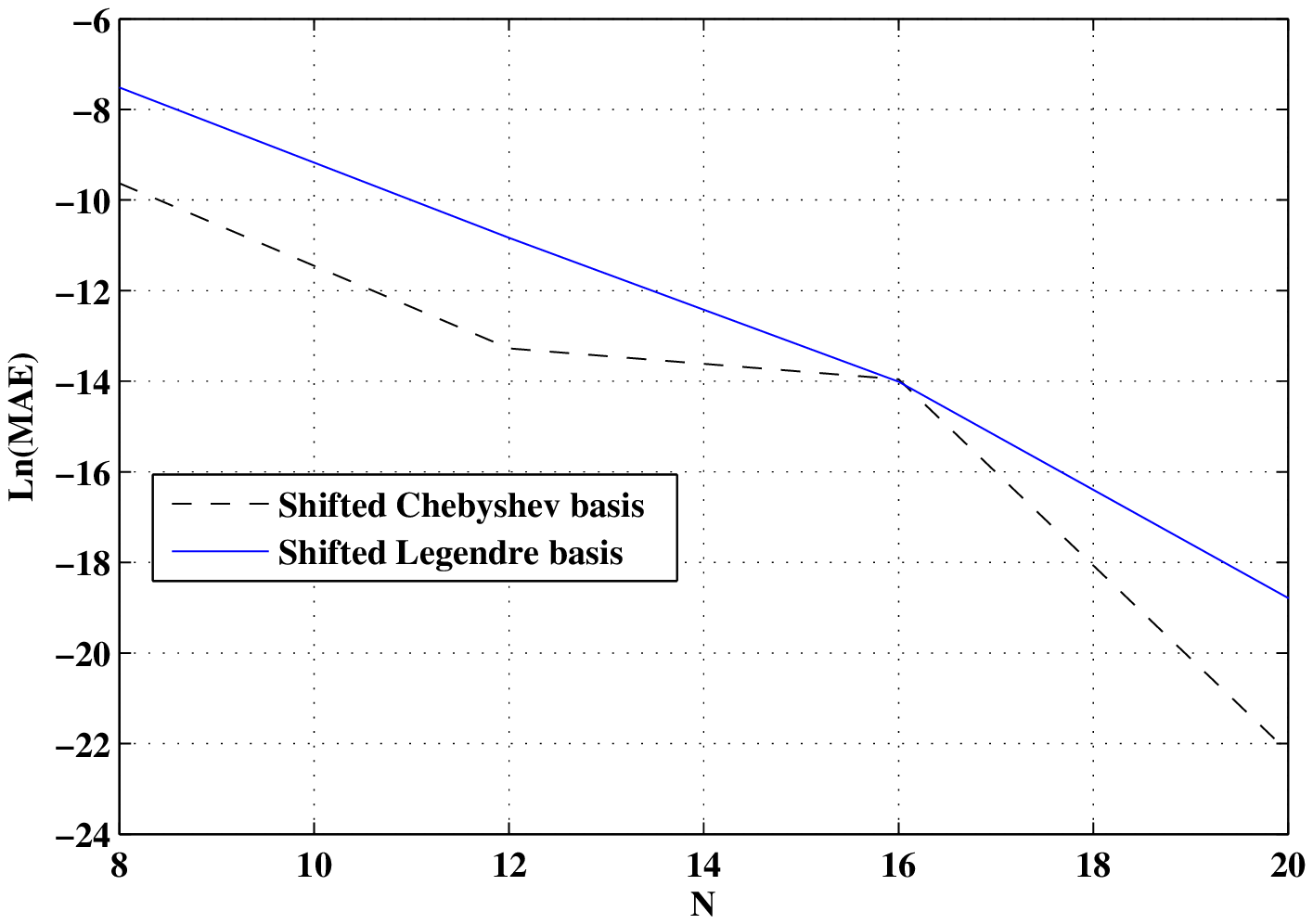}}
\end{picture}
\vspace{-2cm}
\caption{Log plot of MAE of Example~4 with $\nu=1$ and $\tau=3$.}
 \label{NF4}
\end{figure}

\textbf{Example 5.} As a final model example, consider the following FDDE which is introduced by Huang and Williams~\cite{180}
\begin{equation}\label{Ex223}
D^{\nu}u(t)+\delta D^{\nu_1}u(t)=-u(t)+\frac{\mu q^2}{u^3(t)}(u(t)-\gamma u(t-\tau)), \quad t\in (0,1),\\
\end{equation}
equipped with the conditions
\[
u(t)=1, \quad t\in [-\tau, 0], \quad u(1)=3,
\]
where $\delta= 0.3$, $\mu = 1$, $q = 0.4$ and $\gamma =0.2$. Computational results of the shifted Legendre basis with $N=15$ and $\tau=5$ is reported in Table~\ref{table:jad9L} and Table~\ref{table:jad9} demonstrates the results using the shifted Chebyshev basis. A comparison between the second and third columns of Table~\ref{table:jad9} and Table~\ref{table:jad9L} show that the present work is in remarkable agreement with the function $\text{bvp4c}$ of the Matlab software. However, the MAE of the finite difference method at $t=1$ is of order $10^{-2}$~~\cite{180}, but in both presented schemes we get the exact results. The graph of the numerical solutions of \eqref{Ex223} for different values of $\nu$ and $\nu_1$ is sketched in Fig.~\ref{Fig7}. Furthermore, Fig.~\ref{Fig8} confirms the agreement between the present work and the function $\text{bvp4c}$ of the Matlab.
\begin{table}[htp]
\tabcolsep4.5pt
\centering
\caption{Numerical results of Example~5 by the shifted Chebyshev basis with $N=15$ and $\tau=5$.}
\begin{tabular}{c||c|c|ccc}
  \hline
   $t$& \multicolumn{2}{c|}{$\nu=2\&\nu_1=1$}&$\nu=1.5\& \nu_1=0.5$&$\nu=1.75\&\nu_1=0.75$&$\nu=1.95\& \nu_1=0.95$\\
  \hline
  &Current work&bvp4c&\multicolumn{3}{c}{Current work}\\
            \cline{2-6}
0&$1.000000$&$1.000000$&$1.000000$&$1.000000$&$1.000000$\\
  0.25&$1.758262$&$1.758281$&$1.830901$&$1.800878$&$1.767155$\\
  0.75&$2.777401$&$2.777411$&$2.802636$&$2.800559$&$2.783206$\\
  1&$3.000000$&$3.000000$&$3.000000$&$3.000000$&$3.000000$\\
  \hline
\end{tabular}
\label{table:jad9}
\end{table}
\begin{table}[!htp]
\tabcolsep4.5pt
\centering
\caption{Numerical results of Example~5 by the shifted Legendre basis with $N=15$ and $\tau=5$.}
\begin{tabular}{c||c|c|ccc}
  \hline
   $t$& \multicolumn{2}{c|}{$\nu=2\&\nu_1=1$}&$\nu=1.5\& \nu_1=0.5$&$\nu=1.75\&\nu_1=0.75$&$\nu=1.95\& \nu_1=0.95$\\
  \hline
  &Current work&bvp4c&\multicolumn{3}{c}{Current work}\\
            \cline{2-6}
0&$1.000000$&$1.000000$&$1.000000$&$1.000000$&$1.000000$\\
  0.25&$1.758275$&$1.758281$&$1.830918$&$1.800884$&$1.767174$\\
  0.75&$2.777414$&$2.777411$&$2.802647$&$2.800557$&$2.783224$\\
  1&$3.000000$&$3.000000$&$3.000000$&$3.000000$&$3.000000$\\
  \hline
\end{tabular}
\label{table:jad9L}
\end{table}
\begin{figure}[htp]
\vspace{7.25cm}
\begin{picture}(0.,0.)
\put(-155.,-75.){\includegraphics{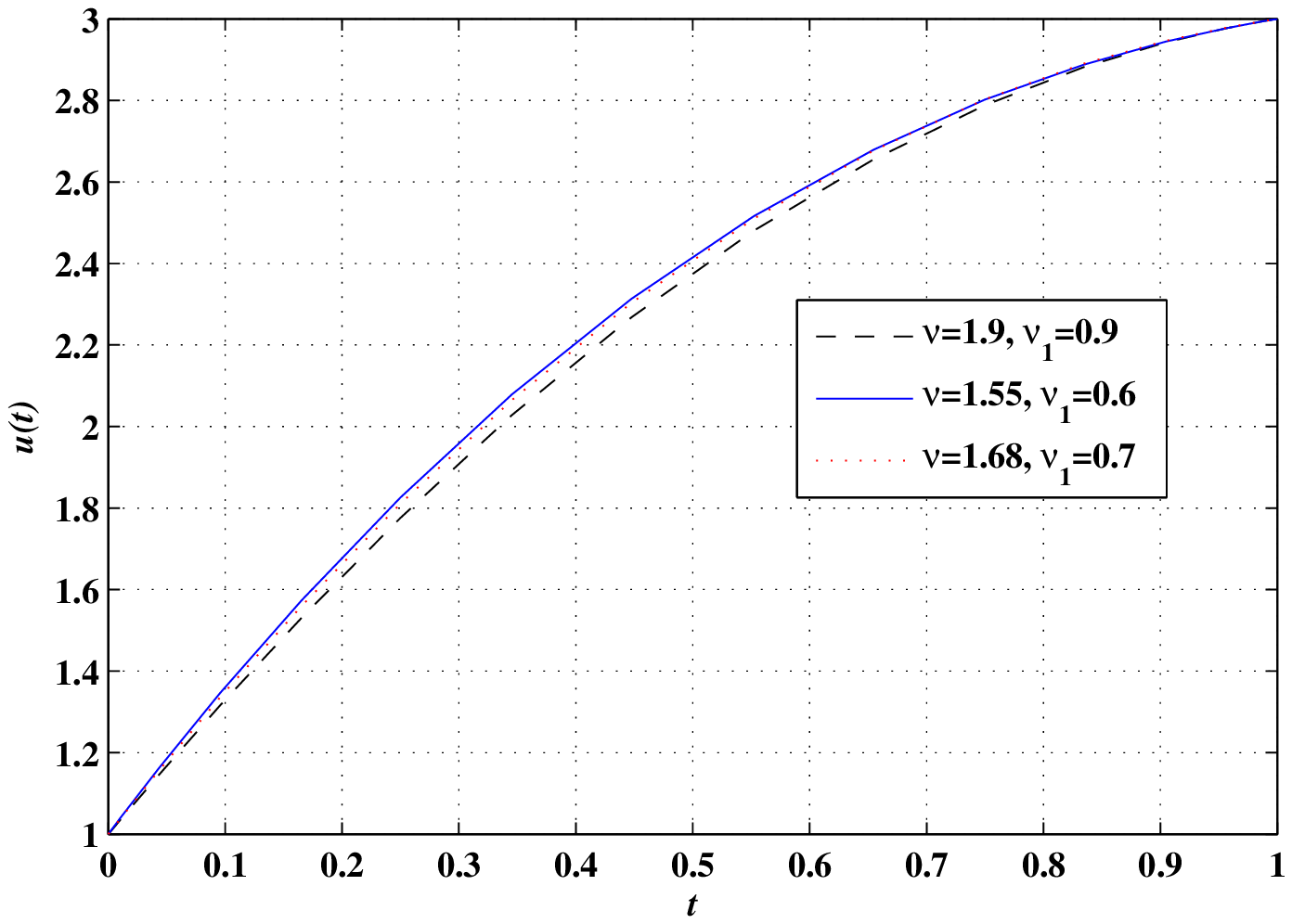}}
\end{picture}
\vspace{-2cm}
\caption{Approximate solution of Example~5 by the shifted Legendre basis with $N=15$ and $\tau=5$.}
 \label{Fig7}
\end{figure}
\begin{figure}[!htp]
\vspace{7.25cm}
\begin{picture}(0.,0.)
\put(-155.,-75.){\includegraphics{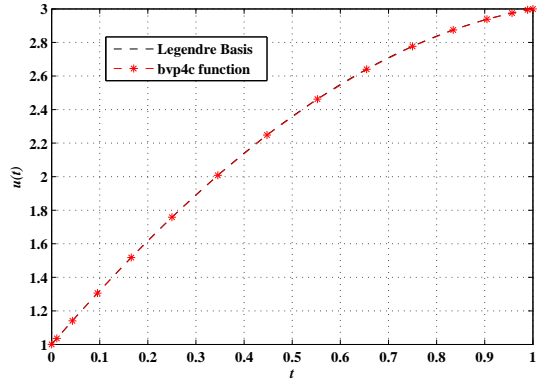}}
\end{picture}
\vspace{-2cm}
\caption{Comparison between the function bvp4c and shifted Legendre basis solutions of Example~5 with $N=15$ and $\tau=5$, where $\nu=2$ and $\nu_1=1$.}
 \label{Fig8}
\end{figure}

The CPU time of the above examples is reported in Table~\ref{table:cpu}. As we see from the table, the CPU time of the shifted Chebyshev basis is less than Legendre one.

\begin{table}[!htbp]
\vspace{.5cm}
\centering
\caption{CPU time (s) for the proposed schemes with $N=15$.}
\begin{tabular}{|c|c|c||c|c||c|}
  \hline
  Example&Shifted Legendre basis&Shifted Chebyshev basis&$\tau$&$\nu$&$\nu_1$\\
  \hline\hline
  1(case i) &$0.765$&$0.621$&$0.5$&$0.1$&$-$\\
  1(case ii)&$1.852$&$1.631$&$0.5$&$0.1$&$-$\\
  2(case i)&$0.462$&$0.283$&$0.5$&$0.1$&$-$\\
  2(case ii)&$0.542$&$0.321$&$0.5$&$0.1$&$-$\\
  3&$1.024$&$0.826$&$3$&$0.9$&$-$\\
  4&$4.657$&$3.245$&$1$&$0.9$&$-$\\
  5&$2.241$&$2.798$&$5$&$1.5$&$0.5$\\
  \hline
\end{tabular}
\label{table:cpu}
\end{table}
%\backmatter

%\section*{Acknowledgments}
%This is acknowledgment text. Provide text here. This is acknowledgment text. Provide text here. This is acknowledgment text. Provide text here. This is acknowledgment text. Provide text here. This is acknowledgment text. Provide text here. This is acknowledgment text. Provide text here. This is acknowledgment text. Provide text here. This is acknowledgment text. Provide text here.

\section{Conclusion}
In this article, a new formula for fractional derivatives of shifted Legendre polynomials is derived. All the fractional derivatives are considered in the Caputo sense. By using the formula and the formula based on shifted Chebyshev polynomials for fractional derivatives~\cite{Mousa}, numerical schemes for solving nonlinear FDDEs are proposed. The proposed schemes exploit the method of steps and shifted Legendre~(Chebyshev) basis to generate an approximate solution. A mathematical analysis shows that the proposed schemes have an exponential rate of convergence. Moreover, practical examples are taken to demonstrate the effectiveness of the obtained results. MAE reveals that the approximate solution has acceptable conformity with the available literature. Further development of the proposed schemes should be concentrated on solving nonlinear fractional delay differential problems with more than one delay. It would also be interesting to extend an approximate solution in which a discontinuous nonlinear $f$ is considered.

\appendix
\section{Proof of relation~(31)}\label{Ap1}
Integrating by parts of the left-hand side of~\eqref{si3} yields
\begin{align}
\frac{1}{\Gamma(m-\nu)}\int_{(i-1)\tau}^{t}\frac{(t-s)^{n-1}}{(n-1)!}\int_{(i-1)\tau}^{s}(s-s_1)^{m-\nu-1}\leftidx{_{\scs{(i)}}}e_{\scs N}^{(m)}(s_1)\mathrm{d}s_1\mathrm{d}s\hspace{6cm}\nonumber\\
=\frac{1}{\Gamma(m-\nu)(n-1)!(m-\nu)}\int_{(i-1)\tau}^{s}(t-s)^{n-1}(s-s_1)^{m-\nu}\leftidx{_{\scs{(i)}}}e_{\scs N}^{(m)}(s_1)|_{s=(i-1)\tau}^{s=t}\mathrm{d}s_1\hspace{4cm}\nonumber\\
+\frac{1}{\Gamma(m-\nu)(n-2)!(m-\nu)}\int_{(i-1)\tau}^{t}\int_{(i-1)\tau}^{s}(t-s)^{n-2}(s-s_1)^{m-\nu}\leftidx{_{\scs{(i)}}}e_{\scs N}^{(m)}(s_1)\mathrm{d}s_1\mathrm{d}s.\hspace{4cm}\nonumber
\end{align}
As we know, one can replace $\int_{(i-1)\tau}^{t}\int_{(i-1)\tau}^{s}F\mathrm{d}s_1\mathrm{d}s$ by $\int_{(i-1)\tau}^{t}\int_{s_1}^{t}F\mathrm{d}s\mathrm{d}s_1$. Therefore, the first term of the right hand side of the above relation becomes zero when we substitute the limits of integration. Thus, we have
\begin{align}
\frac{1}{\Gamma(m-\nu)}\int_{(i-1)\tau}^{t}\frac{(t-s)^{n-1}}{(n-1)!}\int_{(i-1)\tau}^{s}(s-s_1)^{m-\nu-1}\leftidx{_{\scs{(i)}}}e_{\scs N}^{(m)}(s_1)\mathrm{d}s_1\mathrm{d}s\hspace{6cm}\nonumber\\
=\frac{1}{\Gamma(m-\nu)(n-2)!(m-\nu)}\int_{(i-1)\tau}^{t}\int_{(i-1)\tau}^{s}(t-s)^{n-2}(s-s_1)^{m-\nu}\leftidx{_{\scs{(i)}}}e_{\scs N}^{(m)}(s_1)\mathrm{d}s_1\mathrm{d}s.\hspace{4cm}\nonumber
\end{align}
By repeating the above steps, i.e., after $k$ times integrating by parts we have
\begin{align}
\frac{1}{\Gamma(m-\nu)}\int_{(i-1)\tau}^{t}\frac{(t-s)^{n-1}}{(n-1)!}\int_{(i-1)\tau}^{s}(s-s_1)^{m-\nu-1}\leftidx{_{\scs{(i)}}}e_{\scs N}^{(m)}(s_1)\mathrm{d}s_1\mathrm{d}s&\nonumber\\
=M\int_{(i-1)\tau}^{t}\int_{(i-1)\tau}^{s}(t-s)^{n-k-1}(s-s_1)^{m-\nu+k-1}\leftidx{_{\scs{(i)}}}e_{\scs N}^{(m)}(s_1)\mathrm{d}s_1\mathrm{d}s,&\nonumber
\end{align}
where
\[
M=\frac{1}{\Gamma(m-\nu)(n-k-1)!\prod_{j=1}^{k}(m-\nu+j-1)}.
\]
Now, by putting $k=n-1$ we conclude the relation~\eqref{si3}.
\section{Proof of relation~(33)}
Integrating by parts of the left-hand side of~\eqref{si5} yields
\begin{align}
\frac{1}{\Gamma(m-\nu)\prod_{j=1}^{n-1}(m-\nu+j-1)}\int_{(i-1)\tau}^{t}\int_{(i-1)\tau}^{s}(s-s_1)^{m+n-\nu-2}\leftidx{_{\scs{(i)}}}e_{\scs N}^{(m)}(s_1)&\mathrm{d}s_1\mathrm{d}s\nonumber\\
=\frac{1}{\Gamma(m-\nu)\prod_{j=1}^{n-1}(m-\nu+j-1)}\int_{(i-1)\tau}^{t}(s-s_1)^{m+n-\nu-2}\leftidx{_{\scs{(i)}}}e_{\scs N}^{(m-1)}(s_1)|_{s_1=(i-1)\tau}^{s_1=s}&\mathrm{d}s\nonumber\\
+\frac{(m+n-\nu-2)}{\Gamma(m-\nu)\prod_{j=1}^{n-1}(m-\nu+j-1)}\int_{(i-1)\tau}^{t}\int_{(i-1)\tau}^{s}(s-s_1)^{m+n-\nu-3}\leftidx{_{\scs{(i)}}}e_{\scs N}^{(m-1)}(s_1)&\mathrm{d}s_1\mathrm{d}s.\nonumber
\end{align}
After substitution of the limits of integration,  the first term of right-hand side of the above relation becomes zero, considering the initial conditions of Eq.~\eqref{aa1}. Therefore, we have
\begin{align}
\frac{1}{\Gamma(m-\nu)\prod_{j=1}^{n-1}(m-\nu+j-1)}\int_{(i-1)\tau}^{t}\int_{(i-1)\tau}^{s}(s-s_1)^{m+n-\nu-2}\leftidx{_{\scs{(i)}}}e_{\scs N}^{(m)}(s_1)&\mathrm{d}s_1\mathrm{d}s\nonumber\\
=\frac{(m+n-\nu-2)}{\Gamma(m-\nu)\prod_{j=1}^{n-1}(m-\nu+j-1)}\int_{(i-1)\tau}^{t}\int_{(i-1)\tau}^{s}(s-s_1)^{m+n-\nu-3}\leftidx{_{\scs{(i)}}}e_{\scs N}^{(m-1)}(s_1)&\mathrm{d}s_1\mathrm{d}s.\nonumber
\end{align}
By repeating the above steps, i.e., after $m-3$ times integrating by parts we have
\begin{align}
\frac{1}{\Gamma(m-\nu)\prod_{j=1}^{n-1}(m-\nu+j-1)}\int_{(i-1)\tau}^{t}\int_{(i-1)\tau}^{s}(s-s_1)^{m+n-\nu-2}\leftidx{_{\scs{(i)}}}e_{\scs N}^{(m)}(s_1)&\mathrm{d}s_1\mathrm{d}s\nonumber\\
=\frac{\prod_{j=1}^{m-3}(m+n-\nu-1-j)}{\Gamma(m-\nu)\prod_{j=1}^{n-1}(m-\nu+j-1)}\int_{(i-1)\tau}^{t}\int_{(i-1)\tau}^{s}(s-s_1)^{n-\nu+1}\leftidx{_{\scs{(i)}}}e_{\scs N}^{(3)}(s_1)&\mathrm{d}s_1\mathrm{d}s.\nonumber
\end{align}
The obtained result completes the proof of relation~\eqref{si5}.
\section{Boundedness of $D^\nu$}\label{Ap3}
By the definition of Caputo fractional derivative, we have
\begin{equation}
D^\nu u_{\scs N}(t)=\frac{1}{\Gamma(m-\nu)}\int_{0}^{t}(t-s)^{m-\nu-1}u_{\scs N}^{(m)}(s)ds.
\end{equation}
As we know,  if $u_{\scs N}^{(m)}:[0, t] \rightarrow\mathbb{R}$ is continuous and $(t-s)^{m-\nu-1}$ is an integrable function that does not change sign on $[0, t]$, then there exists $\xi$ in $(0, t)$ such that
\[
\int_{0}^{t}(t-s)^{m-\nu-1}u_{\scs N}^{(m)}(t)ds=u_{\scs N}^{(m)}(\xi)\int_{0}^{t}(t-s)^{m-\nu-1}ds=\frac{u_{\scs N}^{(m)}(\xi)}{m-\nu}t^{m-\nu}.
 \]
 Therefore,
 \[
 |D^\nu u_{\scs N}(t)|\le M|u_{\scs N}^{(m)}(\xi)|\le M|u_{\scs N}|_{H^{1:N}_w}.
 \]
 Hence, for $k\ge 1$ we have
 \[
|D^\nu u_{\scs N}|_{H^{k:N}_w}\leq C| u_{\scs N}|_{H^{k:N}_w},
 \]
 which means $D^\nu$ is bounded.

%\nocite{*}% Show all bib entries - both cited and uncited; comment this line to view only cited bib entries;
\bibliographystyle{amsplain}
\bibliography{refdata1}%
\end{document}